\newtheorem{thm}{Theorem}[section]
\newtheorem{cor}[thm]{Corollary}
\newtheorem{lem}[thm]{Lemma}
\newtheorem{conj}[thm]{Conjecture}
\def\Z{\mathbb{Z}}
\def\al{\alpha}
\def\fl#1{\left\lfloor#1\right\rfloor}
\renewcommand{\le}{\leqslant}
\renewcommand{\ge}{\geqslant}
\numberwithin{equation}{section}
\begin{document}


\newbox\Adr
\setbox\Adr\vbox{
\centerline{\sc Victor J. W. Guo$^{1\dagger}$ and
C.~Krattenthaler$^{2\ddagger}$}
\vskip18pt
\centerline{$^1$Department of Mathematics, East China Normal
University}
\centerline{Shanghai 200062, People's Republic of China}
\centerline{Email: {\tt jwguo@math.ecnu.edu.cn}}
\centerline{WWW: \footnotesize\tt
  http://math.ecnu.edu.cn/\textasciitilde{jwguo} }
\vskip18pt
\centerline{$^2$Fakult\"at f\"ur Mathematik, Universit\"at Wien}
\centerline{Oskar-Morgenstern-Platz~1, A-1090 Vienna, Austria.}
\centerline{WWW: \footnotesize{\tt
    http://www.mat.univie.ac.at/\lower0.5ex\hbox{\~{}}kratt}.}}

\title{Some divisibility properties of binomial
and $q$-binomial coefficients}

\author[Victor J. W. Guo and C.~Krattenthaler]{\box\Adr}

\thanks{$^\dagger$Research partially supported by the Fundamental Research Funds for the Central Universities}

\thanks{$^\ddagger$Research partially supported by the Austrian
Science Foundation FWF, grants Z130-N13 and S50-N15,
the latter in the framework of the Special Research Program
``Algorithmic and Enumerative Combinatorics"}

\begin{abstract} We first prove that if $a$ has a prime
  factor not dividing $b$ then there are infinitely many
positive integers $n$ such that $\binom {an+bn} { an}$
is not divisible by $bn+1$.
This confirms a recent conjecture of Z.-W.~Sun.
Moreover, we provide some new divisibility properties of binomial
coefficients: for example, we prove that
$\binom {12n} { 3n}$ and $\binom {12n} { 4n}$ are divisible
by $6n-1$, and that
$\binom {330n} { 88n}$ is divisible by $66n-1$, for all
positive integers $n$. As we show, the latter results are in fact
consequences of divisibility and positivity results for
quotients of $q$-binomial coefficients by $q$-integers,
generalising the positivity of $q$-Catalan numbers.
We also put forward several related conjectures.
\end{abstract}

\keywords{
binomial coefficients, Lucas' theorem, Euler's totient theorem,
$q$-binomial coefficients, Gau\ss ian polynomials,
Catalan numbers, $q$-Catalan numbers, positive polynomials.}

\subjclass[2010]{Primary 11B65; Secondary 05A10, 05A30}

\maketitle

\section{Introduction}
The study of arithmetic properties of binomial coefficients has a long history.
In 1819, Babbage \cite{Babbage} proved the congruence
$$
\binom {2p-1} { p-1}\equiv 1 \pmod{p^2}
$$
for primes $p\geqslant 3$. In 1862, Wolstenholme
\cite{Wolstenholme} showed that the above congruence
holds modulo $p^3$ for any prime $p\geqslant 5$. See
\cite{Me} for
a historical survey on Wolstenholme's theorem. Another famous congruence is
$$
\binom {2n} { n}\equiv 0 \pmod{n+1}.
$$
The corresponding quotients,
the numbers $C_n:=\frac{1}{n+1}\binom {2n} { n}$, are called {\it
  Catalan numbers},
and they have many interesting combinatorial interpretations; see, for example,
\cite{Gould} and \cite[pp.~219--229]{Stanley}.
Recently, Ulas and Schinzel \cite{US} studied divisibility problems of
Erd\H{o}s and Straus, and of Erd\H{o}s and Graham.
In \cite{Sun2,Sun3}, Sun gave some new divisibility properties of
binomial coefficients and their products.
For example, Sun proved the following result.

\begin{thm}\label{thm:0}{\sc\cite[\sc Theorem~1.1]{Sun3}} Let $a$,
  $b$, and $n$ be positive integers. Then
\begin{align}
\binom {an+bn} { an}\equiv 0 \mod \frac{bn+1}{\gcd (a,bn+1)}. \label{eq:anbn}
\end{align}
\end{thm}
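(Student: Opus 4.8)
The plan is to deduce \eqref{eq:anbn} from the integrality of a single $\mathbb{Z}$-linear combination of binomial coefficients. The starting point is the elementary identity
\[
\frac{1}{bn+1}\binom{an+bn}{an}=\frac{1}{(a+b)n+1}\binom{(a+b)n+1}{an},
\]
verified at once by writing both sides as $\dfrac{((a+b)n)!}{(an)!\,(bn+1)!}$. Thus it suffices to control the denominator $(a+b)n+1$ of the right-hand side, the two ``blocks'' of that binomial coefficient being $an$ and $bn+1$, which sum to $(a+b)n+1$.

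Next I would invoke the absorption identities
\[
\frac{an}{(a+b)n+1}\binom{(a+b)n+1}{an}=\binom{(a+b)n}{an-1},
\qquad
\frac{bn+1}{(a+b)n+1}\binom{(a+b)n+1}{an}=\binom{(a+b)n}{an},
\]
both immediate from $k\binom{N}{k}=N\binom{N-1}{k-1}$. Choosing, by B\'ezout, integers $\al,\beta$ with $\gcd(an,bn+1)=\al\cdot an+\beta\cdot(bn+1)$, and forming the corresponding combination of the two displayed identities, one obtains
\[
\gcd(an,bn+1)\cdot\frac{1}{(a+b)n+1}\binom{(a+b)n+1}{an}
=\al\binom{(a+b)n}{an-1}+\beta\binom{(a+b)n}{an}\in\mathbb{Z}.
\]
Since $\gcd(n,bn+1)=1$, we have $\gcd(an,bn+1)=\gcd(a,bn+1)=:g$, and by the first identity this says $g\cdot\frac{1}{bn+1}\binom{an+bn}{an}\in\mathbb{Z}$, i.e. $(bn+1)\mid g\binom{an+bn}{an}$.

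To finish, note that $g=\gcd(a,bn+1)$ divides $bn+1$; writing $bn+1=gh$, the divisibility $(bn+1)\mid g\binom{an+bn}{an}$ reads $gh\mid g\binom{an+bn}{an}$, whence $h=\frac{bn+1}{\gcd(a,bn+1)}\mid\binom{an+bn}{an}$, which is precisely \eqref{eq:anbn}. The argument is entirely elementary; the only spot requiring a little care is the gcd bookkeeping --- first reducing $\gcd(an,bn+1)$ to $\gcd(a,bn+1)$, then using $g\mid bn+1$ to upgrade ``$bn+1$ divides $g\binom{an+bn}{an}$'' to ``$(bn+1)/g$ divides $\binom{an+bn}{an}$''. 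A more computational alternative would be to compare, prime by prime, the $p$-adic valuations of the two sides via Kummer's theorem on carries in base-$p$ addition, but the identity-based route above seems shorter and cleaner.
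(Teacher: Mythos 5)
Your proof is correct, but it follows a genuinely different route from the paper's. The paper does not prove Theorem~\ref{thm:0} directly: it obtains it as the $q\to1$ specialisation of Theorem~\ref{thm:anbn}, which in turn is Lemma~\ref{lem:Andrews} (with $a\mapsto an$, $b\mapsto bn+1$), namely that $\frac{1-q^{\gcd(a,b)}}{1-q^{a+b}}\left[\begin{smallmatrix} a+b\\ a\end{smallmatrix}\right]_q$ is a polynomial with non-negative coefficients; that lemma is proved by counting cyclotomic factors via floor-function inequalities and then invoking the reciprocity/unimodality argument of Lemma~\ref{lem:RSW}. You instead give the classical elementary argument at $q=1$: rewrite $\frac{1}{bn+1}\binom{an+bn}{an}$ as $\frac{1}{(a+b)n+1}\binom{(a+b)n+1}{an}$, use the two absorption identities together with a B\'ezout relation $\gcd(an,bn+1)=\al\cdot an+\beta\,(bn+1)$ to conclude that $\gcd(an,bn+1)\cdot\frac{1}{bn+1}\binom{an+bn}{an}\in\Z$, and then use $\gcd(an,bn+1)=\gcd(a,bn+1)$ (valid since $\gcd(n,bn+1)=1$) plus the fact that this gcd divides $bn+1$ to extract \eqref{eq:anbn}. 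All steps check out, including the edge cases ($an-1\ge0$, the gcd bookkeeping). What each approach buys: yours is shorter, self-contained, and entirely elementary, which is appropriate for the integer statement on its own; the paper's route costs more machinery but yields strictly more, namely the $q$-polynomiality and the non-negativity of the coefficients of the corresponding $q$-quotient, of which Theorem~\ref{thm:0} is only the shadow at $q=1$. Your B\'ezout identity is in fact the $q=1$ shadow of the cyclotomic count in Lemma~\ref{lem:Andrews}, so the two arguments are philosophically parallel even though technically distinct.
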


Sun also proposed the following conjecture.

\begin{conj}\label{conj}{\sc\cite[\sc Conjecture 1.1]{Sun3}}
Let $a$ and $b$ be positive integers. If\break $(bn+1)\left|
\binom {an+bn} { an}\right.$
for all sufficiently large positive integers n, then each prime factor of $a$
divides $b$. In other words, if $a$ has a prime factor not dividing
$b$, then there
are infinitely many positive integers $n$ such that
$(bn+1)\nmid\binom {an+bn} { an}$.
\end{conj}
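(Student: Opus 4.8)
The plan is to reduce everything to a single prime. Fix a prime $p$ with $p\mid a$ and $p\nmid b$ (such a $p$ exists by hypothesis, so in particular $a\ge 2$). It suffices to produce infinitely many positive integers $n$ for which $p\mid bn+1$ but $p\nmid\binom{an+bn}{an}$: the second condition forces $(bn+1)\nmid\binom{an+bn}{an}$, since a divisor of an integer not divisible by $p$ cannot itself be divisible by $p$. (Conversely, if \emph{every} prime factor of $a$ divided $b$, then for each prime $q\mid a$ we would have $q\mid b\mid bn$, so $\gcd(a,bn+1)=1$ and Theorem~\ref{thm:0} would already give $(bn+1)\mid\binom{an+bn}{an}$ for every $n$; thus only this one direction needs work.)

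The idea for the construction is to make $an+bn$ have \emph{all} of its base-$p$ digits as large as possible. Since $p\mid a$ and $p\nmid b$ we have $p\nmid a+b$, so by Euler's totient theorem $p^{\varphi(a+b)}\equiv1\pmod{a+b}$, hence $(a+b)\mid p^{K}-1$ for every positive multiple $K$ of $\varphi(a+b)$. For such a $K$ I would set
\[
n=n_K:=\frac{p^{K}-1}{a+b}\in\Z_{>0},
\]
so that $an+bn=(a+b)n=p^{K}-1$, whose base-$p$ expansion consists of exactly $K$ digits, each equal to $p-1$. Because $a<a+b$, we have $0\le an<p^{K}$, so each base-$p$ digit of $an$ lies in $\{0,1,\dots,p-1\}$ and therefore does not exceed the corresponding digit of $an+bn$. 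By Lucas' theorem this gives $\binom{an+bn}{an}\not\equiv0\pmod p$.

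It remains to check $p\mid bn_K+1$. Directly,
\[
bn_K+1=\frac{b(p^{K}-1)+(a+b)}{a+b}=\frac{bp^{K}+a}{a+b},
\]
where $p\mid bp^{K}+a$ (as $K\ge1$ and $p\mid a$) while $p\nmid a+b$; hence $p\mid bn_K+1$. Combining, $p\mid bn_K+1$ but $p\nmid\binom{an_K+bn_K}{an_K}$, so $(bn_K+1)\nmid\binom{an_K+bn_K}{an_K}$. As $K$ runs through the positive multiples of $\varphi(a+b)$ the integers $n_K$ are pairwise distinct, which produces infinitely many such $n$ and proves the conjecture.

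The only genuinely non-routine step is spotting the substitution $(a+b)n=p^{K}-1$: it is exactly what makes every base-$p$ digit of $an+bn$ equal to $p-1$, so that the non-divisibility of $\binom{an+bn}{an}$ by $p$ becomes a one-line consequence of Lucas' theorem, while at the same time turning $bn+1$ into $(bp^{K}+a)/(a+b)$, which is manifestly a multiple of $p$. Everything after that is bookkeeping; the one point to keep in mind is the inequality $a<a+b$, which guarantees $an<p^{K}$ and hence lets Lucas be applied to the honest base-$p$ digits of $an$.
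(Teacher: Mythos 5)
Your proof is correct, and while it hinges on the same decisive substitution as the paper --- taking $n=(p^{K}-1)/(a+b)$ with $K$ a multiple of $\varphi(a+b)$, so that $(a+b)n=p^{K}-1$ has every base-$p$ digit equal to $p-1$ --- the way you extract non-divisibility from it is genuinely different and in fact simpler. The paper first proves (Theorem~\ref{thm:1}) that $\binom{an+bn}{an-1}\equiv\pm1\pmod p$ for these $n$, and then feeds this into the identity
\[
\frac{1}{bn+1}\binom {an+bn}{an}=\binom {an+bn}{an-1}-\frac{a+b}{a}\binom {an+bn-1}{an-2},
\]
concluding that the left-hand side is not an integer because the second term on the right has negative $p$-adic valuation. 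You instead apply Lucas' theorem directly to $\binom{an+bn}{an}$ itself (legitimately, since $an<(a+b)n<p^{K}$ guarantees honest digits), obtaining $\binom{an+bn}{an}\equiv\pm1\pmod p$, and pair this with the elementary observation that $bn+1=(bp^{K}+a)/(a+b)$ is divisible by $p$ because $p\mid a$, $p\mid bp^K$ and $p\nmid a+b$; no binomial identity is needed, and the conclusion $(bn+1)\nmid\binom{an+bn}{an}$ is immediate. Your argument even yields the slightly stronger statement that $\binom{an+bn}{an}$ is a unit modulo $p$ while $bn+1$ is not, and it implicitly gives the same bound $f(a,b)\le(p^{\varphi(a+b)}-1)/(a+b)$ recorded in Theorem~\ref{thm:2}; your parenthetical converse via Theorem~\ref{thm:0} also matches the paper's framing. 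All the minor checks (positivity of $n_K$, distinctness of the $n_K$, the divisibility transfer through $p$) are in order.
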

Inspired by Conjecture~\ref{conj}, Sun \cite{Sun3} introduced a new
function $f\colon \mathbb{Z}^+\times\mathbb{Z}^+\to\mathbb{N}$.
Namely,
for positive integers $a$ and $b$, if $\binom {an+bn} { an}$ is divisible
by $bn+1$ for all $n\in\mathbb{Z}^+$, then he defined $f(a,b)=0$;
otherwise, he let
$f(a,b)$ be the smallest positive integer $n$ such that $\binom
{an+bn} { an}$ is not divisible
by $bn+1$. Using {\sl Mathematica}, Sun \cite{Sun3} computed
some values of the function $f$:
$$
f(7,36)=279,\quad f(10,192)=362,\quad f(11,100)=1187,\quad
f(22,200)=6462,\quad \ldots.
$$

The present paper serves several purposes: first of all, we give a proof
of Conjecture~\ref{conj} (see Theorem~\ref{thm:2} below);
second, we provide congruences and divisibility results similar to
the ones addressed in Theorem~\ref{thm:0} and Conjecture~\ref{conj}
(see Theorems~\ref{thm:1}--\ref{thm:3} in Section~\ref{sec:I}); third, we show
in Section~\ref{sec:q} that
among these results there is a significant number which can be
``lifted to the $q$-world;" in other words, there are several such
results which follow directly from stronger divisibility results
for $q$-polynomials. In particular, Theorem~\ref{thm:0} is an
easy consequence of Theorem~\ref{thm:anbn}, and Theorem~\ref{thm:3}
is an easy consequence of Theorem~\ref{thm:4}.
On the other hand, Theorem~\ref{conj:2} hints at the limitations
of occurrence of these divisibility phenomena.
Sections~\ref{sec:Proof1}--\ref{sec:Proof?} are devoted to
the proofs of our results in Sections~\ref{sec:I} and \ref{sec:q}.
We close our paper with Section~\ref{sec:open} by posing several open
problems.

\section{Results, I}
\label{sec:I}

Our first result is a more precise version of Conjecture~\ref{conj}.

\begin{thm}\label{thm:2}
Conjecture~{\em\ref{conj}} is true. Moreover,
if $p$ is a prime such that $p\mid a$ but $p\nmid b$,
then
$$
f(a,b)\leqslant \frac{p^{\varphi(a+b)}-1}{a+b},
$$
where $\varphi(n)$ is Euler's totient function.
\end{thm}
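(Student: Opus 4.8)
The plan is to exhibit, for every $k\ge 1$, an explicit positive integer $n_k$ with $(bn_k+1)\nmid\binom{an_k+bn_k}{an_k}$, where $n_1$ equals the asserted bound $\frac{p^{\varphi(a+b)}-1}{a+b}$. Infinitude of the family $\{n_k\}$ then proves Conjecture~\ref{conj}, and the value of $n_1$ gives the estimate on $f(a,b)$.

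First I would observe that $p\nmid a+b$, since $p\mid a$ together with $p\mid(a+b)$ would force $p\mid b$, contrary to hypothesis. Hence $\gcd(p,a+b)=1$, so Euler's totient theorem gives $p^{k\varphi(a+b)}\equiv 1\pmod{a+b}$ for all $k\ge 1$; moreover $p^{\varphi(a+b)}>1$ and $p^{\varphi(a+b)}\equiv1\pmod{a+b}$ together force $p^{\varphi(a+b)}\ge a+b+1$, so $n_k:=\frac{p^{k\varphi(a+b)}-1}{a+b}$ is indeed a positive integer, and the $n_k$ are pairwise distinct. Now fix $k$, set $m:=k\varphi(a+b)$ and $n:=n_k$, so that $(a+b)n=p^m-1$. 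On the one hand, $bn+1=(a+b)n-an+1=p^m-an$, which is divisible by $p$ because $p\mid a$ and $m\ge 1$. On the other hand, $\binom{an+bn}{an}=\binom{p^m-1}{an}$; since all base-$p$ digits of $p^m-1$ equal $p-1$, Lucas' theorem gives $\binom{p^m-1}{an}\equiv(-1)^{\sigma}\pmod p$, where $\sigma$ is the base-$p$ digit sum of $an$ --- in particular this is nonzero modulo $p$, as every base-$p$ digit $d$ of $an$ satisfies $0\le d\le p-1$ and $\binom{p-1}{d}\equiv(-1)^d\pmod p$. Thus $p$ divides $bn+1$ but not $\binom{an+bn}{an}$, so $(bn+1)\nmid\binom{an+bn}{an}$.

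I expect no genuine obstacle here; the whole argument turns on the choice of $n$, forcing $(a+b)n$ to have the form $p^m-1$. That single relation does two things simultaneously: it renders $\binom{an+bn}{an}$ coprime to $p$ via Lucas (because $p^m-1$ has all base-$p$ digits equal to $p-1$), and, together with $p\mid a$, it makes $bn+1=p^m-an$ divisible by $p$. The only point to verify in order to run this is that admissible $n$ are positive integers, which follows from $\gcd(p,a+b)=1$ via Euler's theorem (with the small extra remark above ensuring positivity). Using the exponent $\varphi(a+b)$ yields the stated bound; replacing it by the multiplicative order of $p$ modulo $a+b$ would give a sharper but less tidy estimate.
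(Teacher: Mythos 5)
Your argument is correct, and it uses the same choice of $n$ as the paper --- namely $n=(p^{r\varphi(a+b)}-1)/(a+b)$, so that $an+bn=p^{r\varphi(a+b)}-1$ has all base-$p$ digits equal to $p-1$ and Lucas' theorem applies --- hence it yields exactly the stated bound on $f(a,b)$. Where you diverge is in the concluding step. The paper routes the argument through the identity
$$
\frac{1}{bn+1}\binom{an+bn}{an}=\binom{an+bn}{an-1}-\frac{a+b}{a}\binom{an+bn-1}{an-2},
$$
applies Lucas (via its Theorem on $\binom{an}{bn+\beta}$, with $\beta=-1$) to show the first term is $\pm1\bmod p$, and then uses $p\mid a$, $p\nmid(a+b)$ to see that the second term has negative $p$-adic valuation, so the difference cannot be an integer. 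You instead observe directly that $bn+1=p^m-an$ is divisible by $p$ (since $p\mid a$), while Lucas applied to $\binom{p^m-1}{an}$ itself gives $\binom{an+bn}{an}\equiv\pm1\pmod p$; comparing $p$-adic valuations then finishes the proof in one line. Your route is strictly simpler: it bypasses the auxiliary decomposition and the detour through the companion theorem, and it makes transparent why this particular $n$ works (it simultaneously forces $p\mid bn+1$ and $p\nmid\binom{an+bn}{an}$). Your closing remark about replacing $\varphi(a+b)$ by the multiplicative order of $p$ modulo $a+b$ matches the sharper bound the authors record in their concluding section. All the small verifications you flag (positivity and integrality of $n_k$, $an\le p^m-1$, distinctness of the $n_k$) go through as you describe.
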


For the proof of the above result, we need the following theorem.

\begin{thm}\label{thm:1}
Let $a$ and $b$ be positive integers with $a>b$,
and $\beta$ an integer. Let $p$ be a
prime not dividing $a$. Then there
are infinitely many positive integers $n$ such that
$$
\binom {an} { bn+\beta}\equiv \pm 1\pmod p.
$$
\end{thm}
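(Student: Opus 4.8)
The plan is to exploit Lucas' theorem together with the fact that, since $p\nmid a$, the order of $a$ modulo appropriate powers of $p$ is well controlled. The key idea is to choose $n$ so that $an$ has a very simple base-$p$ expansion — ideally of the form $an = p^m - 1$ for suitable $m$, so that every base-$p$ digit of $an$ equals $p-1$. For such $n$, Lucas' theorem gives $\binom{an}{k}\equiv\prod_i\binom{p-1}{k_i}\pmod p$, where $k=\sum_i k_i p^i$ is the base-$p$ expansion of $k$; and since $\binom{p-1}{j}\equiv(-1)^j\pmod p$ for $0\le j\le p-1$, this product is simply $(-1)^{k}$, which is $\pm1$. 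So once we have infinitely many $n$ with $an$ of the form $p^m-1$, we are essentially done: we just need to verify that $k=bn+\beta$ indeed has all its base-$p$ digits in the range $\{0,\dots,p-1\}$, which is automatic, and that $0\le bn+\beta\le an$ so that the binomial coefficient is the ``genuine'' one Lucas' theorem applies to — here we use $a>b$ and take $n$ large so that $bn+\beta$ lies strictly between $0$ and $an$.

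First I would make precise the choice of $n$. Since $\gcd(a,p)=1$, the congruence $an\equiv -1\pmod{p^m}$ has a unique solution $n=n_m$ modulo $p^m$ for every $m\ge1$; choosing the representative in $\{1,\dots,p^m\}$ and then writing $an_m+1 = c_m p^m$ for the appropriate positive integer $c_m$, I would like $c_m=1$, i.e.\ $an_m = p^m-1$ exactly. This holds precisely when $n_m = (p^m-1)/a$, which requires $a\mid p^m-1$, i.e.\ $m$ to be a multiple of $\operatorname{ord}_a(p)$ (the multiplicative order of $p$ modulo $a$, which exists since $p\nmid a$). So the clean choice is: let $d=\operatorname{ord}_a(p)$ and set $n = (p^{dt}-1)/a$ for $t=1,2,3,\dots$; this gives infinitely many positive integers $n$, each with $an = p^{dt}-1$ having base-$p$ expansion $(p-1,p-1,\dots,p-1)$.

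Next I would carry out the Lucas computation. With $an=p^{M}-1$ (where $M=dt$), for any integer $k$ with $0\le k\le an$ write $k=\sum_{i=0}^{M-1}k_ip^i$ with $0\le k_i\le p-1$. Lucas' theorem gives $\binom{an}{k}=\binom{p^M-1}{k}\equiv\prod_{i=0}^{M-1}\binom{p-1}{k_i}\pmod p$, and since $\binom{p-1}{j}\equiv(-1)^j\pmod p$, the right-hand side is $(-1)^{\sum_i k_i}$. Now $\sum_i k_i\equiv k\pmod{p-1}$, so in particular its parity is not simply that of $k$ in general — but that does not matter: the value $(-1)^{\sum_i k_i}$ is always $\pm1$, which is exactly the conclusion we want. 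Finally, with $k=bn+\beta$, the condition $0\le bn+\beta\le an$ holds for all sufficiently large $t$ because $bn<an$ (as $b<a$ and $n>0$) and the gap $an-bn=(a-b)n$ grows without bound, absorbing the fixed constant $\beta$; discarding finitely many small $t$ still leaves infinitely many valid $n$. This completes the argument.

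The main obstacle — really the only subtlety — is arranging that $an$ has the exceptionally simple base-$p$ form rather than merely that $an\equiv-1$ modulo some prime power; this is what forces the introduction of $\operatorname{ord}_a(p)$ and the restriction to exponents $M$ that are multiples of it. Everything else (Lucas, the evaluation $\binom{p-1}{j}\equiv(-1)^j$, and the range check using $a>b$) is routine. One should double-check the edge case $p=2$, where $\pm1$ collapses to $1$ and $\binom{p-1}{j}=\binom{1}{j}\in\{1,1\}$, so the statement still holds trivially for those $n$.
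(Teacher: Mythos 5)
Your proposal is correct and follows essentially the same route as the paper: pick $n=(p^M-1)/a$ for infinitely many exponents $M$ with $a\mid p^M-1$ (the paper takes $M=r\varphi(a)$ via Euler's theorem, you take multiples of $\operatorname{ord}_a(p)$, which is the same idea), then apply Lucas' theorem with all digits of $an$ equal to $p-1$ and use $\binom{p-1}{j}\equiv(-1)^j\pmod p$, with $a>b$ guaranteeing $0\le bn+\beta\le an$ for large $n$. No further comment is needed.
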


Our proofs of Theorems~\ref{thm:2} and \ref{thm:1} are based on Euler's
totient theorem and Lucas' classical theorem on the congruence
behaviour of binomial coefficients modulo prime numbers, see
Section~\ref{sec:Proof1}.

\medskip
In \cite[Corollary 2.3]{Guo}, the first author proved that
\begin{align}
\binom {6n} { 3n}\equiv 0 \pmod{2n-1}. \label{eq:6n3n}
\end{align}
It is easy to see that
\begin{align}
\binom {2n} { n}=2\binom {2n-1} { n}=\frac{4n-2}{n}\binom {2n-2} {
  n-1}\equiv 0 \pmod{2n-1}. \label{eq:6n3n-1}
\end{align}

The next theorem gives congruences similar to
\eqref{eq:6n3n} and \eqref{eq:6n3n-1}.

\begin{thm}\label{thm:3} Let $n$ be a positive integer. Then
\begin{align}
\binom {12n} { 3n} &\equiv \binom {12n} { 4n}  \equiv
0\pmod{6n-1}, \label{eq:6n3n-2} \\[5pt]
\binom {30n} { 5n} &\equiv 0\pmod{(10n-1)(15n-1)},  \label{eq:6n3n-3}\\[5pt]
\binom {60n} { 6n}&\equiv \binom {120n} { 40n} \equiv \binom {120n} {
  45n}  \equiv 0\pmod{30n-1}, \label{eq:6n3n-4}\\[5pt]
\binom {330n} { 88n} &\equiv 0\pmod{66n-1}. \label{eq:6n3n-5}
\end{align}
\end{thm}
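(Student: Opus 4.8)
The plan is to establish each asserted divisibility $d\mid\binom{An}{Bn}$ one prime at a time, via Kummer's theorem on the $p$-adic valuation of binomial coefficients. First, since $3(10n-1)-2(15n-1)=-1$, the two factors in \eqref{eq:6n3n-3} are coprime, so it suffices to prove $(10n-1)\mid\binom{30n}{5n}$ and $(15n-1)\mid\binom{30n}{5n}$ separately. Thus in every case one is reduced to the following: given a statement ``$(cn-1)\mid\binom{An}{Bn}$'' (with $c\mid A$) and a prime power $p^e$ with $p^e\mid cn-1$, show $v_p\binom{An}{Bn}\ge e$; summing over $p$ then yields the divisibility.

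The key point is that $p\mid cn-1$ forces $p\nmid c$, so the congruence $cn\equiv1\pmod{p^e}$ determines $n$ modulo $p^e$ and thereby pins down $Bn$, $(A-B)n$, $An$ modulo $p^k$ for each $k\le e$: writing $\beta=B/c$, $\gamma=(A-B)/c$, $\delta=A/c$, one has $Bn\equiv\beta$, $(A-B)n\equiv\gamma$, $An\equiv\delta\pmod{p^k}$, where $\delta\in\{2,3,4,5\}$ is an integer, $\beta+\gamma=\delta$, and $\beta\notin\Z$ in every instance. By the carry formulation of Kummer's theorem, $v_p\binom{An}{Bn}$ equals the number of $k\ge1$ with $(Bn\bmod p^k)+((A-B)n\bmod p^k)\ge p^k$, and I would prove this inequality for every $k$ with $1\le k\le e$. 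For $k$ with $p^k>\delta$ one has $An\bmod p^k=\delta$, and a short computation shows the inequality is then equivalent to $Bn\bmod p^k>\delta$, i.e.\ to $p^k\nmid B-ct$ for all $t\in\{0,1,\dots,\delta\}$.

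Since each $B-ct$ is a fixed nonzero integer (nonzero because $\beta\notin\Z$) and $p\nmid c$, only a short explicit list of prime powers can divide some $B-ct$. For \eqref{eq:6n3n-2} and \eqref{eq:6n3n-4} every prime dividing $\prod_{t=0}^{\delta}(B-ct)$ already divides $c$, so the inequality holds for all $k\le e$ and the divisibility is immediate. For \eqref{eq:6n3n-3} the only survivors are $p^k=3$ (for the factor $10n-1$) and $p^k=2$ (for the factor $15n-1$), and for \eqref{eq:6n3n-5} the only survivor is $p^k=5$; in each of these $p^k=\delta$, so $(Bn\bmod p^k)+((A-B)n\bmod p^k)$ is a multiple of $p^k$ lying in $(0,2p^k)$ — positive because $p\nmid B$ — hence equals $p^k$, again giving the required carry.

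The conceptual work sits in the second paragraph: reducing modulo $p^e$ to replace the two summands by fixed rationals, and the carry criterion. The only finicky part — and what I would expect to take the most care — is the bookkeeping of the finitely many small prime powers with $p^k\le\delta$, together with the boundary case $p^k=\delta$. I would also remark that \eqref{eq:6n3n-2}--\eqref{eq:6n3n-5} should follow more structurally from divisibility-and-positivity results for quotients of $q$-binomial coefficients by $q$-integers — the ``lift to the $q$-world'' announced in the introduction, carried out in Section~\ref{sec:q} — which subsumes all of these congruences simultaneously.
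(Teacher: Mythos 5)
Your proof is correct: I checked the reduction of \eqref{eq:6n3n-3} to the two coprime moduli $10n-1$ and $15n-1$, the carry criterion, the claim that for \eqref{eq:6n3n-2} and \eqref{eq:6n3n-4} every prime factor of $\prod_{t=0}^{\delta}(B-ct)$ already divides $c$ (so that no prime $p\mid cn-1$ can kill a carry), and the three surviving boundary cases $p^k=\delta$, namely $(p,\delta)=(3,3)$ for $10n-1$, $(2,2)$ for $15n-1$, and $(5,5)$ for $66n-1$; all check out. However, this is not the route the paper takes. The paper deduces Theorem~\ref{thm:3} by letting $q\to1$ in Theorem~\ref{thm:4}, whose proof shows that the exponent $e_d=-\chi\big(d\mid(cn-1)\big)+\fl{An/d}-\fl{Bn/d}-\fl{(A-B)n/d}$ of \emph{every} cyclotomic polynomial $\Phi_d$, $d\ge2$, in the quotient $\frac{1-q}{1-q^{cn-1}}\left[\begin{smallmatrix}An\\Bn\end{smallmatrix}\right]_q$ is non-negative, by a case analysis on the fractional part $\{Bn/d\}$ when $d\mid cn-1$. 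Your Kummer-carry condition at level $k$ is exactly the statement $\fl{An/p^k}-\fl{Bn/p^k}-\fl{(A-B)n/p^k}=1$, i.e.\ the paper's inequality $e_d\ge0$ specialised to prime-power $d=p^k$ and summed via Legendre's formula; so the two arguments share the same arithmetic core, but yours is organised through the integers $B-ct$ rather than through the admissible fractional parts $X=(jd+1)/(\delta d)$, and it only needs prime powers rather than all $d$. What the paper's version buys is the stronger $q$-statement (polynomiality and non-negativity of the coefficients, via Lemma~\ref{lem:RSW}); what yours buys is a self-contained elementary proof of the integer congruences that does not pass through cyclotomic polynomials at all — closer in spirit to the direct proof of \eqref{eq:6n3n-5} via $\binom{330n}{88n}=\frac{15}{4}\binom{330n-1}{88n-1}$ and explicit gcd bookkeeping that also appears in the source, though cleaner and uniform across all eight instances.
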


We shall see that this theorem is the consequence of a stronger
result for $q$-binomial coefficients, cf.\ Theorem~\ref{thm:4}
in the next section.

It seems that there should exist many more congruences like
\eqref{eq:6n3n}--\eqref{eq:6n3n-5}. (In this direction, see
Conjecture~\ref{conj:oddp2}.)
On the other hand, we have the following negative result.

\begin{thm} \label{conj:2}
There are no positive integers $a$ and $b$ such that
$$
\binom {an+bn} { an}\equiv 0 \pmod {3n-1}
$$
for all $n\geqslant 1$.
\end{thm}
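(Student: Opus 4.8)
The plan is to argue by contradiction: suppose positive integers $a,b$ exist with $(3n-1)\mid\binom{an+bn}{an}$ for all $n\ge1$, and derive a contradiction by exhibiting a specific $n$ and a specific prime $p$ for which $p\mid 3n-1$ but $p\nmid\binom{(a+b)n}{an}$. Writing $m=a+b$, the hypothesis says $3n-1$ divides $\binom{mn}{an}$ for every $n$. The natural strategy is to mimic the proof of Theorem~\ref{thm:1}: we want to choose $n$ so that $3n-1$ has a prime factor $p$ with $p\nmid m$, and so that modulo $p$ the base-$p$ digits of $an$ never exceed those of $mn$, whence Lucas' theorem forces $\binom{mn}{an}\equiv\pm1\not\equiv0\pmod p$. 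The cleanest way to control the base-$p$ expansion is to make $mn=p^k-1$ (or a small, explicit multiple of such), since then $mn$ has all digits equal to $p-1$ in base $p$, and \emph{every} binomial coefficient $\binom{mn}{j}$ is a unit mod $p$ by Lucas.

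More concretely, here is the key step. Pick any prime $p$ not dividing $3m$ and with $p\equiv1\pmod3$, so that $3\mid p-1$; there are infinitely many such $p$ by Dirichlet. Euler's totient theorem gives $p^{\varphi(3m)}\equiv1\pmod{3m}$, so $3m\mid p^{\varphi(3m)}-1$. Set $n=\dfrac{p^{\varphi(3m)}-1}{3m}$; this is a positive integer. Then $mn=\dfrac{p^{\varphi(3m)}-1}{3}$, which is an integer since $3\mid p^{\varphi(3m)}-1$ (as $p\equiv1\pmod3$). Now I must check two things: first, that $p\mid 3n-1$, so that divisibility of $\binom{mn}{an}$ by $3n-1$ would entail divisibility by $p$; second, that $p\nmid\binom{mn}{an}$. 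For the first, observe $3n-1=\dfrac{p^{\varphi(3m)}-1}{m}-1=\dfrac{p^{\varphi(3m)}-1-m}{m}$; I need $p\mid p^{\varphi(3m)}-1-m$, i.e. $m\equiv-1\pmod p$. This is an extra congruence condition on $p$, so in fact I should choose $p$ in the arithmetic progression $p\equiv1\pmod3$ and $p\equiv-1\pmod m$ simultaneously (compatible when $\gcd(3,m)$ divides $1+(-1)\cdot\text{stuff}$—one checks the two congruences are compatible, splitting into cases according to $\gcd(3,m)$; if $3\mid m$ the two conditions become $p\equiv1\pmod3$ and $p\equiv-1\pmod m$, which together force $1\equiv-1\pmod{\gcd(3,m)}=3$, impossible, so this naive choice needs adjustment when $3\mid m$). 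For the second point, with $mn$ of the form $\frac{p^{\varphi(3m)}-1}{3}$ I do not get all digits $p-1$; instead I should simply take $n=\dfrac{p^{\varphi(3m)}-1}{3m}\cdot t$ for a suitable small multiplier, or better, directly arrange $mn=p^k-1$: choose $p$ with $p\equiv1\pmod{3}$ and then set $mn$ equal to the smallest power-of-$p$ minus one that is divisible by $m$, i.e. $mn=p^{\varphi(m)\cdot(\text{something})}-1$; then \emph{all} base-$p$ digits of $mn$ equal $p-1$, Lucas gives $\binom{mn}{an}\equiv\pm1\pmod p$, and it remains only to force $p\mid 3n-1$.

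The main obstacle, and the part requiring the most care, is the simultaneous satisfaction of all the congruence and divisibility constraints on the single parameter $n$ (equivalently, on the prime $p$ and an auxiliary exponent): we need $mn=p^k-1$ for Lucas to give a unit, \emph{and} $p\mid 3n-1$, \emph{and} $p\nmid 3m$, all at once, and the compatibility of these may fail for the most obvious ansatz when $3\mid a+b$. I expect to resolve this by treating $3\mid m$ and $3\nmid m$ separately, and in the delicate case using a slightly more flexible construction: instead of insisting $mn=p^k-1$ exactly, allow $mn=c\cdot(p^k-1)/(p^j-1)$ or a two-prime construction (one prime $p$ dividing $3n-1$, the exponent arranged via Euler's theorem so that $an$ and $mn$ have controlled digits), and then invoke Lucas digit-by-digit. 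Once the parameter is pinned down, verifying $p\nmid\binom{mn}{an}$ via Lucas and $p\mid 3n-1$ by construction is routine, and the contradiction with the standing hypothesis is immediate. I would also double-check the small-$m$ cases ($a+b\le$ a few) by hand, since the asymptotic construction needs $p$ large enough to avoid finitely many bad primes.
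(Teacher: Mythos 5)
Your overall strategy --- exhibit one $n$ and one prime $p$ with $p\mid 3n-1$ but $p\nmid\binom{(a+b)n}{an}$ --- is the right shape of argument, and your route (Euler's theorem plus Lucas, forcing $(a+b)n$ to have all base-$p$ digits equal to $p-1$) is genuinely different from the paper's, which instead takes $3n-1=p$ itself to be a prime $\equiv 2\pmod 3$ in a carefully calibrated interval (via McCurley's explicit bounds on $\theta(x;3,2)$, a five-way case analysis on $a,b$ modulo $3$ and their relative sizes, and a computer check for $a,b\le 1850$) and computes the $p$-adic valuation from Legendre's formula. However, your sketch contains a concrete error and leaves the hardest cases open, so it is not yet a proof.

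The error: writing $m=a+b$ and $n=(p^{K}-1)/(3m)$, you correctly derive that $p\mid 3n-1$ requires $p^{K}-1-m\equiv -1-m\equiv 0\pmod p$, i.e.\ $p\mid m+1$; but you then recast this as the condition $p\equiv -1\pmod m$ and propose to choose $p$ by Dirichlet. These are different statements: $p\mid m+1$ confines $p$ to the \emph{finitely many} prime divisors of the fixed integer $m+1$, so there is no freedom to take $p$ large or in a prescribed residue class. The same phenomenon occurs in your preferred variant $mn=p^k-1$: there $3n-1=(3p^k-3-m)/m$, so $p\mid 3n-1$ if and only if $p\mid m+3$. That variant does in fact succeed whenever $m+3$ has a prime factor $p\ne 3$ (such a $p$ is automatically coprime to $m$, one picks $k$ with $m\mid p^k-1$, and Lucas gives $\binom{p^k-1}{an}\equiv\pm1\pmod p$) --- a nice observation --- but it fails outright when $m+3$ is a power of $3$, i.e.\ $a+b\in\{6,24,78,240,\dots\}$, and these are precisely the cases your proposal defers to an unexecuted ``two-prime construction'' or modified ansatz. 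Since the published proof needs explicit prime-counting estimates and a finite computation to cover all $(a,b)$, the missing cases cannot be waved through as routine; as it stands the argument is incomplete.
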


For a possible generalisation of this theorem see Conjecture~\ref{conj:oddp}
in the last section.

\section{Results, II: $q$-divisibility properties}
\label{sec:q}

Recall that the {\it $q$-binomial coefficients} (also called
{\it Gau\ss ian polynomials}) are defined by
$$
\begin{bmatrix}
n\\ k\end{bmatrix}_q
=\begin{cases}
\displaystyle\frac{(1-q^n)(1-q^{n-1})\cdots(1-q)}
{(1-q^k)(1-q^{k-1})\cdots(1-q)(1-q^{n-k})(1-q^{n-k-1})\cdots(1-q)}, &\text{if
  $0\leqslant k\leqslant n$},\\[5pt]
0,&\text{otherwise.}
\end{cases}
$$

We begin with the announced strengthening of Theorem~\ref{thm:3}.

\begin{thm}\label{thm:4} Let $n$ be a positive integer. Then all of
\begin{multline} \label{eq:6n-1}
\frac {1-q} {1-q^{6n-1}}
\begin{bmatrix} {12n}\\ { 3n}\end{bmatrix}_q ,\
\frac {1-q} {1-q^{6n-1}}
\begin{bmatrix} {12n}\\ { 4n}\end{bmatrix}_q  ,\
\frac {1-q} {1-q^{30n-1}}
\begin{bmatrix} {60n}\\ { 6n}\end{bmatrix}_q,\\
\frac {1-q} {1-q^{30n-1}}
\begin{bmatrix} {120n}\\ { 40n}\end{bmatrix}_q,\
\frac {1-q} {1-q^{30n-1}}
\begin{bmatrix} {120n}\\ { 45n}\end{bmatrix}_q  ,\
\frac {1-q} {1-q^{66n-1}}
\begin{bmatrix} {330n}\\ { 88n}\end{bmatrix}_q
\end{multline}
are polynomials in $q$ with non-negative integer coefficients.
Furthermore,
\begin{equation} \label{eq:10n15n}
\frac {(1-q)^2} {(1-q^{10n-1})(1-q^{15n-1})}
\begin{bmatrix} {30n}\\ { 5n}\end{bmatrix}_q
\end{equation}
is a polynomial in $q$.
\end{thm}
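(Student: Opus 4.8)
Theorem~\ref{thm:4} consists of two assertions of rather different character, and I shall concentrate on its final one, that \eqref{eq:10n15n} is a polynomial in $q$; this is a pure cyclotomic-divisibility statement, in contrast to the non-negativity claim about the six quotients in \eqref{eq:6n-1}. The plan rests on two standard facts. First, $1-q^m=\prod_{d\mid m}\Phi_d(q)$, where $\Phi_d$ denotes the $d$-th cyclotomic polynomial and $\Phi_1(q)=1-q$. Second, since $1-q^j$ contains the factor $\Phi_d$ precisely when $d\mid j$, the exponent of $\Phi_d(q)$ in $\begin{bmatrix}N\\k\end{bmatrix}_q$ equals $\fl{N/d}-\fl{k/d}-\fl{(N-k)/d}$, a quantity that always lies in $\{0,1\}$ and equals $1$ exactly when $(k\bmod d)+\bigl((N-k)\bmod d\bigr)\ge d$, equivalently when $k\bmod d>N\bmod d$. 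Applying the first fact to $\frac{(1-q)^2}{(1-q^{10n-1})(1-q^{15n-1})}\begin{bmatrix}30n\\5n\end{bmatrix}_q$, the two factors $1-q$ in the numerator cancel the copy of $\Phi_1$ present in each denominator, and what remains to prove is that $\prod_{d\mid 10n-1,\ d\ge2}\Phi_d(q)\cdot\prod_{d\mid 15n-1,\ d\ge2}\Phi_d(q)$ divides $\begin{bmatrix}30n\\5n\end{bmatrix}_q$ in $\Z[q]$.

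The crucial observation is that $\gcd(10n-1,15n-1)=1$, because $3(10n-1)-2(15n-1)=-1$; consequently no $d\ge2$ divides both $10n-1$ and $15n-1$, so the cyclotomic factors above are pairwise distinct and each appears to the first power. Hence it suffices to show, for every $d\ge2$ dividing $10n-1$ or $15n-1$, that $\Phi_d(q)\mid\begin{bmatrix}30n\\5n\end{bmatrix}_q$, i.e.\ that $5n\bmod d>30n\bmod d$. If $d\mid 10n-1$, then $d$ is odd, and $10n\equiv1\pmod d$ forces $5n\bmod d=(d+1)/2$ and $30n\equiv3\pmod d$; one then checks $(d+1)/2>(3\bmod d)$ for all such $d$, treating $d=3$ directly (there $30n\equiv0\pmod 3$ while $5n\bmod 3=2$) and noting that $d=5$ cannot occur since $5\nmid10n-1$. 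If $d\mid 15n-1$, then $15n\equiv1\pmod d$ gives $30n\equiv2\pmod d$; for $d=2$ the hypothesis forces $n$ odd, so $5n$ is odd and $5n\bmod2=1>0$, whereas for $d\ge3$ one has $30n\bmod d=2$, and $5n\equiv0,1,2\pmod d$ would force $d\mid1$, $d\mid2$, $d\mid5$ respectively, all impossible here (since $d\ge3$ and $5\nmid15n-1$), so $5n\bmod d\ge3>2$. This yields the asserted divisibility, so \eqref{eq:10n15n} is a polynomial.

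The step I expect to demand the most care is exactly this modular case analysis: the ``carry'' inequality $5n\bmod d>30n\bmod d$ must be verified for every admissible $d$, including $d=2,3$ and, among the divisors of $15n-1$, possibly even $d$. It is here that the precise shape of the statement enters — replacing the arguments $30n,5n$ by general multiples $6k,k$ would break down exactly when $5\mid 2k-1$ or $5\mid 3k-1$, which is why $k$ is required to be a multiple of $5$. As for the remaining (and harder) part of Theorem~\ref{thm:4}, the non-negativity of the six quotients in \eqref{eq:6n-1} lies beyond what cyclotomic bookkeeping can give; I would attack it by trying to express each quotient as a manifestly non-negative combination of $q$-binomial coefficients, in the spirit of the known combinatorial description of the $q$-Catalan numbers, or by linking it through a $q$-series identity to a familiar family of non-negative $q$-Catalan-type polynomials — and I expect this to be the real obstacle in the full theorem.
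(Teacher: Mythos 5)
Your treatment of the final assertion --- that \eqref{eq:10n15n} is a polynomial --- is correct and is essentially the paper's own argument for that part: both reduce the claim to checking that the exponent of each cyclotomic polynomial $\Phi_d$ is non-negative, both use $\gcd(10n-1,15n-1)=1$ to ensure the two denominators contribute disjoint sets of cyclotomic factors, and both finish by a case analysis on residues modulo $d$. Your carry criterion $5n\bmod d>30n\bmod d$ is a clean equivalent of the paper's fractional-part computation, and your handling of the edge cases $d=2$, $d=3$ and the exclusion of $d=5$ is sound.

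However, the main body of the theorem --- that the six quotients in \eqref{eq:6n-1} are polynomials with \emph{non-negative} coefficients --- is left unproven. You explicitly defer it, and the route you sketch (a manifestly non-negative combinatorial or $q$-series expansion) is not what is needed; indeed your premise that non-negativity ``lies beyond what cyclotomic bookkeeping can give'' is mistaken. The paper's key ingredient is Lemma~\ref{lem:RSW} (extracted from Andrews and Reiner--Stanton--White): if $P(q)$ is reciprocal and unimodal, $m\le n$, and $A(q)=\frac{1-q^m}{1-q^n}P(q)$ is a polynomial, then $A(q)$ automatically has non-negative coefficients --- unimodality makes the coefficients of $(1-q^m)P(q)$ non-negative up to degree $\deg(P)/2$, hence the same holds for $A(q)$ as a power series, and reciprocality of $A(q)$ handles the upper half. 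Since $q$-binomial coefficients are reciprocal and unimodal, non-negativity for each entry of \eqref{eq:6n-1} reduces to exactly the kind of cyclotomic-exponent count you already carried out, e.g.\ for the first entry one must check
$$
-\chi\big(d\mid (6n-1)\big)+\fl{\tfrac{12n}{d}}-\fl{\tfrac{3n}{d}}-\fl{\tfrac{9n}{d}}\ge 0
$$
for all $d\ge2$, and similarly for the other five. So the missing ideas are the unimodality/reciprocality lemma and the (routine but necessary) polynomiality verifications for the six quotients; without them the central claim of the theorem is not established.
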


For a conjectural stronger form of the last assertion in
the above theorem see Conjecture~\ref{conj:330n88n} at
the end of the paper.
\medskip

It is obvious that, when $a=b=1$, the numbers
$
\binom {an+bn} { an}/(bn+1)
$
(featured implicitly in Conjecture~\ref{conj} and in Theorem~\ref{thm:2})
reduce to the Catalan numbers $C_n$. There are various $q$-analogues
of the Catalan numbers. See F\"{u}rlinger and Hofbauer \cite{FH} for
a survey,
and see \cite{GH,Haiman,Haglund} for the so-called $q,t$-Catalan numbers.

A natural $q$-analogue of $C_n$ is
$$
C_n(q)=\frac{1-q}{1-q^{n+1}} \begin{bmatrix} 2n\\ n\end{bmatrix}_q.
$$
It is well known that the $q$-Catalan numbers $C_n(q)$ are
polynomials with non-negative integer coefficients
(see \cite{Andrews87,Andrews93,Andrews10,FH}).
Furthermore, Haiman \cite[(1.7)]{Haiman} proved
(and it follows from Lemma~\ref{lem:Andrews} below) that the polynomial
$$
\frac{1-q}{1-q^{bn+1}}\begin{bmatrix} bn+n\\ n\end{bmatrix}_q
$$
has non-negative coefficients
for all $b,n\geqslant 1$. Another generalisation of
$C_n(q)$ was introduced by the first author and Zeng \cite{GZ}:
$$
B_{n,k}(q):=\frac{1-q^k}{1-q^n}\begin{bmatrix} 2n\\ n-k\end{bmatrix}_q
=\begin{bmatrix} 2n-1\\ n-k\end{bmatrix}_q-\begin{bmatrix}
2n-1\\ n-k-1\end{bmatrix}_{q}q^k,\quad 1\leqslant k\leqslant n.
$$
They noted that the $B_{n,k}(q)$'s are polynomials in $q$, but did not
address the question whether they are polynomials with non-negative
coefficients. As the next theorem shows, this turns out to be the
case. The theorem establishes in fact a stronger non-negativity property.

\begin{thm} \label{thm:(k,n)}
Let $n$ and $k$ be non-negative integers with $0\le k\le n$.
Then
\begin{equation} \label{eq:(k,n)}
\frac {1-q^{\gcd(k,n)}} {1-q^n}\bmatrix 2n\\n-k\endbmatrix_q
\end{equation}
is a polynomial in $q$ with non-negative integer coefficients.
Consequently, also $B_{n,k}(q)=\frac{1-q^k}{1-q^n}\left[\begin{smallmatrix}
  2n\\ n-k\end{smallmatrix}\right]_q$ is a polynomial with non-negative
coefficients.
\end{thm}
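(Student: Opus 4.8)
The plan is to establish the stronger claim first, namely that $\frac{1-q^{\gcd(k,n)}}{1-q^n}\bmatrix 2n\\n-k\endbmatrix_q$ has non-negative integer coefficients, and then deduce the statement about $B_{n,k}(q)$ as an immediate corollary. The corollary will follow because $\gcd(k,n)$ divides $k$, so that $\frac{1-q^k}{1-q^n}\bmatrix 2n\\n-k\endbmatrix_q = \frac{1-q^k}{1-q^{\gcd(k,n)}}\cdot\frac{1-q^{\gcd(k,n)}}{1-q^n}\bmatrix 2n\\n-k\endbmatrix_q$, and the first factor $\frac{1-q^k}{1-q^{\gcd(k,n)}} = 1 + q^{\gcd(k,n)} + q^{2\gcd(k,n)} + \cdots$ is a polynomial with non-negative coefficients; a product of two polynomials with non-negative coefficients again has non-negative coefficients. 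The case $k=0$ needs a separate word since then $\gcd(0,n)=n$ and the expression is just $1$.

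For the main assertion, I would first verify that the expression is indeed a polynomial. Writing $d=\gcd(k,n)$, the identity $\frac{1-q^d}{1-q^n}\bmatrix 2n\\n-k\endbmatrix_q = \frac{1-q^d}{1-q^k}\cdot\frac{1-q^k}{1-q^n}\bmatrix 2n\\n-k\endbmatrix_q$ shows polynomiality is weaker than the polynomiality of $B_{n,k}(q)$, which is already known from \cite{GZ} (and is anyway recovered by the telescoping $B_{n,k}(q)=\bmatrix 2n-1\\n-k\endbmatrix_q - q^k\bmatrix 2n-1\\n-k-1\endbmatrix_q$ quoted in the excerpt). The real work is non-negativity of the coefficients. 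My approach is to find a combinatorial or bijective interpretation: $\bmatrix 2n\\n-k\endbmatrix_q$ is the generating function $\sum q^{\mathrm{maj}(\pi)}$ (or $\sum q^{\inv(\pi)}$) over lattice paths from $(0,0)$ to $(n+k,n-k)$ using unit steps, equivalently over subsets, and multiplying by $\frac{1-q^d}{1-q^n}$ should correspond to restricting to a "fundamental domain" under a cyclic action of order $n/d$ on these paths, combined with a shift. Concretely, I expect a $\Z/n\Z$-type cyclic sieving or a direct "cycle lemma"/Cycle-Lemma-of-Dvoretzky-and-Motzkin style argument: among the $n$ cyclic rotations of a suitable path encoding, exactly $d$ of them are "good", and the $q$-weight tracks a statistic that shifts by a fixed amount under rotation.

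The key steps, in order, are: (1) dispose of trivial cases ($k=0$, and $k=n$ where the expression is $\frac{1-q^n}{1-q^n}=1$); (2) reduce to the non-negativity statement and derive the $B_{n,k}$ corollary from it as above; (3) set $d=\gcd(k,n)$, write $n=dm$, $k=d\ell$ with $\gcd(\ell,m)=1$, and reinterpret $\frac{1-q^d}{1-q^n}\bmatrix 2n\\n-k\endbmatrix_q$ as $\frac{1}{[m]_{q^d}}$ times a $q$-binomial, i.e.\ as a "$q$-rational" that we must show is a polynomial with non-negative coefficients; (4) produce the combinatorial model — lattice paths or standard Young tableaux of a two-row shape — on which $\Z/m\Z$ acts, and show the weight statistic is equivariant up to a global shift, so that summing over orbit representatives (each orbit having size dividing $m$, with the count of free orbits arranged to give the factor $\frac{[d]_{q}}{[n]_q}$... ) yields the claimed polynomial with manifestly non-negative coefficients; (5) alternatively, if the cyclic-action route proves unwieldy, fall back on an explicit positive formula such as a sum of products of smaller $q$-binomial coefficients — e.g.\ an identity expressing $\frac{1-q^{\gcd(k,n)}}{1-q^n}\bmatrix 2n\\n-k\endbmatrix_q$ as a non-negative combination of terms $q^{\,?}\bmatrix 2n-1\\ ?\endbmatrix_q$ refining the telescoping identity — and verify it by a routine $q$-binomial manipulation.

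The main obstacle I anticipate is step (4): finding the right cyclic action (or the right positive closed form) so that the factor $\frac{1-q^{\gcd(k,n)}}{1-q^n}$, rather than just the more natural $\frac{1-q^k}{1-q^n}$ of $B_{n,k}$, emerges with non-negative coefficients. The appearance of $\gcd(k,n)$ strongly suggests an orbit-counting mechanism (the number of fixed points under a generator of $\Z/m\Z$, or the number of orbits of a given period), so I would invest most effort in pinning down that action and checking equivariance of the statistic; once that is in place, non-negativity is automatic and the rest is bookkeeping.
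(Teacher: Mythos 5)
There are two genuine gaps. First, your polynomiality argument runs in the wrong direction. You write
$\frac{1-q^{d}}{1-q^{n}}\left[\begin{smallmatrix}2n\\ n-k\end{smallmatrix}\right]_q
=\frac{1-q^{d}}{1-q^{k}}\cdot B_{n,k}(q)$ with $d=\gcd(k,n)$ and claim this shows polynomiality is ``weaker than'' that of $B_{n,k}(q)$. But since $d\mid k$, the factor $\frac{1-q^{d}}{1-q^{k}}$ is the \emph{reciprocal} of a polynomial, so knowing $B_{n,k}(q)$ is a polynomial tells you nothing about the product; the $\gcd$-version is in fact the \emph{stronger} statement (it implies polynomiality of $B_{n,k}$, not conversely). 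Polynomiality must be proved directly; the paper does this by a cyclotomic-exponent count: writing the expression as $\prod_d\Phi_d(q)^{e_d}$ with
$e_d=\chi(d\mid\gcd(k,n))-\chi(d\mid n)+\fl{2n/d}-\fl{(n-k)/d}-\fl{(n+k)/d}$,
one checks $e_d\ge0$ in the two cases $d\mid n$ (where both truth functions and the floor terms cancel to $0$) and $d\nmid n$ (where superadditivity of the floor function suffices). This is short but it is not the reduction you propose.

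Second, and more seriously, your non-negativity argument is a research plan rather than a proof: you yourself flag that the cyclic action in step (4) is not pinned down, and the fallback in step (5) is only a hoped-for identity. The idea you are missing is the paper's Lemma on reciprocal unimodal polynomials (Lemma~\ref{lem:RSW}, after Andrews and Reiner--Stanton--White): if $P(q)$ is reciprocal and unimodal, $m\le n$, and $\frac{1-q^m}{1-q^n}P(q)$ is a polynomial, then that polynomial automatically has non-negative coefficients. Since $q$-binomial coefficients are reciprocal and unimodal, this reduces the entire non-negativity claim to the polynomiality just discussed, and no combinatorial model or cyclic sieving is needed. Your deduction of the $B_{n,k}$ statement from the main claim (via $\gcd(k,n)\mid k$ and positivity of $\frac{1-q^k}{1-q^{\gcd(k,n)}}$) is correct and matches the paper.
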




Applying the inequality \cite[(2.1)]{Sun3}, we can also easily deduce that
$$
C_{a,b,n}(q):=\frac{1-q^a}{1-q^{bn+1}}\begin{bmatrix}
  an+bn\\ an\end{bmatrix}_q
$$
is a product of certain cyclotomic polynomials, and therefore a
polynomial in $q$. Again, as it turns out, all coefficients in these
polynomials are non-negative. Also here, we have
actually a stronger result,
given in the theorem below. It should be noted that it generalises
Theorem~\ref{thm:0}, the latter being obtained upon letting $q\to1$.

\begin{thm} \label{thm:anbn}
Let $a$, $b$, and $n$ be positive integers. Then
$$
\frac {1-q^{\gcd(an,bn+1)}} {1-q^{bn+1}}\bmatrix
an+bn\\an\endbmatrix_q
=\frac {1-q^{\gcd(an,bn+1)}} {1-q^{an+bn+1}}\bmatrix
an+bn+1\\an\endbmatrix_q
$$
is a polynomial in $q$ with non-negative coefficients.
\end{thm}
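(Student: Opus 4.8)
The plan is to realise the quantity in question as an explicitly non-negative combination of products of $q$-binomial coefficients. The starting point is the factorisation
$$
\frac{1-q^{\gcd(an,bn+1)}}{1-q^{bn+1}}\bmatrix an+bn\\ an\endbmatrix_q
=\frac{1-q^{\gcd(an,bn+1)}}{1-q^{an+bn+1}}\bmatrix an+bn+1\\ an\endbmatrix_q,
$$
which is immediate from $\bmatrix N\\ k\endbmatrix_q=\frac{1-q^{N+1-k}}{1-q^{N+1}}\bmatrix N+1\\ k\endbmatrix_q$ applied with $N=an+bn$, $k=an$; so polynomiality and non-negativity of one side are equivalent to those of the other. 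The first serious step is to reduce the factor $1-q^{\gcd(an,bn+1)}$ to a factor $1-q^{an}$. Writing $g=\gcd(an,bn+1)$, one has the telescoping identity $\frac{1-q^{an}}{1-q^g}=\sum_{j=0}^{an/g-1}q^{jg}$, a polynomial with non-negative coefficients; hence it suffices to prove that
$$
\frac{1-q^{an}}{1-q^{an+bn+1}}\bmatrix an+bn+1\\ an\endbmatrix_q
$$
has non-negative coefficients, and then divide this (inside the ring of power series, clearing the extra cyclotomic factors) — more carefully, one shows $\frac{1-q^g}{1-q^{an}}\cdot\frac{1-q^{an}}{1-q^{an+bn+1}}\bmatrix an+bn+1\\ an\endbmatrix_q$ is the quotient of a non-negative polynomial by the non-negative polynomial $\sum_{j}q^{jg}$, and invokes the known fact (the same mechanism used for ordinary $q$-Catalan numbers) that such a quotient, once known a priori to be a polynomial, is automatically non-negative because $\sum_j q^{jg}$ divides it \emph{with} non-negative quotient. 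Actually the cleanest route is: prove non-negativity of $\frac{1-q^{an}}{1-q^{an+bn+1}}\bmatrix an+bn+1\\ an\endbmatrix_q$, observe this polynomial is visibly divisible by $\frac{1-q^{an}}{1-q^g}$'s reciprocal obstruction... — I will instead handle the gcd factor at the end, see below.

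The core computation is to expand $\frac{1-q^{an}}{1-q^{an+bn+1}}\bmatrix an+bn+1\\ an\endbmatrix_q$. Using $\frac{1-q^{an}}{1-q^{an+bn+1}}=1-\frac{q^{an}(1-q^{bn+1})}{1-q^{an+bn+1}}$ together with the identity $\frac{1-q^{bn+1}}{1-q^{an+bn+1}}\bmatrix an+bn+1\\ an\endbmatrix_q=\bmatrix an+bn\\ an\endbmatrix_q$, one gets
$$
\frac{1-q^{an}}{1-q^{an+bn+1}}\bmatrix an+bn+1\\ an\endbmatrix_q
=\bmatrix an+bn+1\\ an\endbmatrix_q-q^{an}\bmatrix an+bn\\ an\endbmatrix_q.
$$
Now I apply the $q$-Pascal recurrence $\bmatrix an+bn+1\\ an\endbmatrix_q=\bmatrix an+bn\\ an\endbmatrix_q+q^{an}\bmatrix an+bn\\ an-1\endbmatrix_q$ — wait, the correct form is $\bmatrix N\\ k\endbmatrix_q=\bmatrix N-1\\ k\endbmatrix_q+q^{N-k}\bmatrix N-1\\ k-1\endbmatrix_q$, giving $\bmatrix an+bn+1\\ an\endbmatrix_q=\bmatrix an+bn\\ an\endbmatrix_q+q^{bn+1}\bmatrix an+bn\\ an-1\endbmatrix_q$, which on substitution yields
$$
\frac{1-q^{an}}{1-q^{an+bn+1}}\bmatrix an+bn+1\\ an\endbmatrix_q
=(1-q^{an})\bmatrix an+bn\\ an\endbmatrix_q+q^{bn+1}\bmatrix an+bn\\ an-1\endbmatrix_q.
$$
This is not yet manifestly non-negative, so I iterate: apply the other $q$-Pascal relation to $(1-q^{an})\bmatrix an+bn\\ an\endbmatrix_q=\bmatrix an+bn\\ an\endbmatrix_q-q^{an}\bmatrix an+bn\\ an\endbmatrix_q$ and use $\bmatrix an+bn\\ an\endbmatrix_q=\bmatrix an+bn-1\\ an-1\endbmatrix_q+q^{an}\bmatrix an+bn-1\\ an\endbmatrix_q$ to cancel the subtracted term against part of $q^{an}\bmatrix an+bn\\ an\endbmatrix_q$; the expectation, matching the structure of the known $q$-Catalan identity $C_n(q)=\bmatrix 2n\\ n\endbmatrix_q-q\bmatrix 2n\\ n-1\endbmatrix_q=\bmatrix 2n-1\\ n\endbmatrix_q-q^2\bmatrix 2n-1\\ n-2\endbmatrix_q=\dots$, is that after a finite, $b$-independent number of such steps one arrives at a sum $\sum_{i\ge0} c_i(q)\,q^{e_i}\bmatrix an+bn-1\\ an-1-i\,?\endbmatrix_q$ or better still collapses to a single telescoped expression with non-negative coefficients. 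The honest statement of what I would prove is: there is a positive-coefficient polynomial identity
$$
\frac{1-q^{an}}{1-q^{an+bn+1}}\bmatrix an+bn+1\\ an\endbmatrix_q
=\sum_{j=0}^{an-1} q^{(bn+1)+j(b+1)n\;?}\,\Big(\text{something non-negative}\Big),
$$
and the cleanest derivation is via the Andrews-type lemma alluded to in the excerpt (Lemma~\ref{lem:Andrews}, used for Haiman's result $\frac{1-q}{1-q^{bn+1}}\bmatrix bn+n\\ n\endbmatrix_q\ge0$): that lemma expresses exactly such quotients as positive sums over lattice paths, and the present case is the substitution $n\mapsto an$, $bn+1\mapsto bn+1$ in its statement.

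The main obstacle is pinning down the precise non-negative closed form after the iteration — i.e.\ identifying the analogue of the Carlitz/Andrews formula for $\frac{1-q}{1-q^{bn+1}}\bmatrix bn+n\\ n\endbmatrix_q$ in the present setting with the extra parameter $a$ in the numerator $1-q^{an}$. I expect the right tool is Lemma~\ref{lem:Andrews} together with the elementary inequality from \cite[(2.1)]{Sun3} already cited in the excerpt, which guarantees a priori that the expression is a polynomial; granting polynomiality, non-negativity then follows from writing $\frac{1-q^{an}}{1-q^{an+bn+1}}\bmatrix an+bn+1\\ an\endbmatrix_q$ as $\bmatrix an+bn+1\\ an\endbmatrix_q-q^{an}\bmatrix an+bn\\ an\endbmatrix_q$ and checking, via the Lindström–Gessel–Viennot / lattice-path interpretation of $q$-binomial coefficients as generating functions for partitions in a box, that the second term is coefficientwise dominated by the first after the shift — the paths counted by $q^{an}\bmatrix an+bn\\ an\endbmatrix_q$ inject into those counted by $\bmatrix an+bn+1\\ an\endbmatrix_q$ with matching $q$-weight. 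Finally, to restore the stated numerator $1-q^{\gcd(an,bn+1)}$ in place of $1-q^{an}$: since $g=\gcd(an,bn+1)$ divides $an$, the factor $\frac{1-q^{an}}{1-q^g}=\sum_{j=0}^{an/g-1}q^{jg}$ is a non-negative polynomial, and dividing the non-negative polynomial $\frac{1-q^{an}}{1-q^{an+bn+1}}\bmatrix an+bn+1\\ an\endbmatrix_q$ by it — which by \cite[(2.1)]{Sun3} again yields a polynomial — gives a non-negative quotient, because division of a non-negative polynomial by $1+q^g+q^{2g}+\cdots+q^{(m-1)g}$, when the quotient is known to be a polynomial, preserves non-negativity (group coefficients in residue classes mod $g$ and apply the one-variable fact for $1+x+\cdots+x^{m-1}$). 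This completes the reduction and the proof.
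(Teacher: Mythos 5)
Your proposal has a genuine gap at its crucial last step. After correctly reducing to $\frac{1-q^{an}}{1-q^{an+bn+1}}\begin{bmatrix}an+bn+1\\ an\end{bmatrix}_q$ (which, incidentally, all your $q$-Pascal iterations notwithstanding, is simply equal to $\begin{bmatrix}an+bn\\ an-1\end{bmatrix}_q$, so its non-negativity is immediate), you must pass from the numerator $1-q^{an}$ back to $1-q^{g}$ with $g=\gcd(an,bn+1)$, i.e.\ divide the non-negative polynomial $\begin{bmatrix}an+bn\\ an-1\end{bmatrix}_q$ by $1+q^{g}+\cdots+q^{(an/g-1)g}$. Your justification --- that dividing a non-negative polynomial by $1+x+\cdots+x^{m-1}$ preserves non-negativity whenever the quotient is a polynomial --- is false: $\frac{1+x^{3}}{1+x}=1-x+x^{2}$. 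So the reduction runs in the wrong direction; the easy direction (multiplying by $\frac{1-q^{an}}{1-q^{g}}$) gives Corollary~\ref{cor:anbn} from Theorem~\ref{thm:anbn}, but not conversely. The middle of your argument is also only a sketch by your own admission, though as noted the intermediate claim it aims at is trivially true.

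What is missing is precisely the structural input the paper uses, namely Lemma~\ref{lem:RSW}: if $P(q)$ is reciprocal and unimodal and $\frac{1-q^{m}}{1-q^{n}}P(q)$ happens to be a polynomial, then that polynomial automatically has non-negative coefficients. The paper applies this with $P$ the (reciprocal, unimodal) $q$-binomial coefficient, so that the whole burden shifts to proving \emph{polynomiality} of $\frac{1-q^{\gcd(a,b)}}{1-q^{a+b}}\begin{bmatrix}a+b\\ a\end{bmatrix}_q$, which is done by a short count of cyclotomic factors (Lemma~\ref{lem:Andrews}); Theorem~\ref{thm:anbn} is then the substitution $a\mapsto an$, $b\mapsto bn+1$. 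Your proposal never invokes unimodality or any equivalent, and without it the non-negativity of the gcd-numerator quotient does not follow. Note also that Sun's inequality \cite[(2.1)]{Sun3} only yields polynomiality for the numerator $1-q^{a}$, not for the stronger $1-q^{\gcd(an,bn+1)}$, so even the polynomiality you take for granted requires the separate cyclotomic computation.
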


\begin{cor} \label{cor:anbn}
Let $a$, $b$, and $n$ be positive integers. Then
$$
\frac {1-q^a} {1-q^{bn+1}}\bmatrix
an+bn\\an\endbmatrix_q
$$
is a polynomial in $q$ with non-negative coefficients.
\end{cor}

The proofs of the results in this section are given in
Section~\ref{sec:Proof2}.

\section{Proofs of Theorems \ref{thm:2} and \ref{thm:1}}
\label{sec:Proof1}

The proof of Theorem~\ref{thm:1}
(from which subsequently Theorem~\ref{thm:2} is derived)
makes essential use of Lucas' classical theorem
on binomial coefficient congruences (see, for example,
\cite{DaWeAA,Fine,Granville,Razpet}). For the convenience of
the reader, we recall the theorem below.

\begin{thm}[\sc Lucas' theorem]Let $p$ be a prime, and let
  $a_0,b_0,\ldots, a_m,b_m\in\{0,1,\break\ldots,p-1\}$. Then
$$
\binom {a_0+a_1p+\cdots+a_m p^m} { b_0+b_1p+\cdots+b_m p^m}\equiv
\prod_{i=0}^m \binom {a_i} { b_i}\pmod p.
$$
\end{thm}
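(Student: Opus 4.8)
The plan is to prove Lucas' theorem by comparing coefficients in a polynomial identity over $\mathbb{F}_p$, exploiting the ``freshman's dream'' $(1+x)^p\equiv 1+x^p\pmod p$. First I would record this basic congruence: since $p\mid\binom pj$ for $1\le j\le p-1$, we have $(1+x)^p\equiv 1+x^p\pmod p$ in $\mathbb{Z}[x]$ (all congruences between polynomials understood coefficientwise), and raising to the $p$-th power repeatedly gives $(1+x)^{p^i}\equiv 1+x^{p^i}\pmod p$ for every $i\ge 0$.

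Next, writing $n=a_0+a_1p+\cdots+a_mp^m$, I would compute
$$
(1+x)^n=\prod_{i=0}^m\bigl((1+x)^{p^i}\bigr)^{a_i}
\equiv\prod_{i=0}^m\bigl(1+x^{p^i}\bigr)^{a_i}\pmod p .
$$
Then I would extract the coefficient of $x^k$ on both sides, where $k=b_0+b_1p+\cdots+b_mp^m$. On the left it is $\binom nk$. On the right, expanding each factor by the binomial theorem, a typical monomial arises by choosing an integer $c_i$ with $0\le c_i\le a_i$ from the $i$-th factor, contributing $\binom{a_i}{c_i}x^{c_ip^i}$, so the monomial has exponent $\sum_{i}c_ip^i$ and coefficient $\prod_i\binom{a_i}{c_i}$. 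Since $0\le c_i\le a_i\le p-1$, the tuple $(c_0,\dots,c_m)$ is precisely the base-$p$ expansion of $\sum_i c_ip^i$; by uniqueness of base-$p$ representations, the requirement $\sum_i c_ip^i=k$ forces $c_i=b_i$ for all $i$. Hence the coefficient of $x^k$ on the right equals $\prod_{i=0}^m\binom{a_i}{b_i}$, and comparison with the left-hand side yields $\binom nk\equiv\prod_{i=0}^m\binom{a_i}{b_i}\pmod p$, as claimed.

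The only point requiring care — the ``main obstacle,'' such as it is — is this digit-matching step: one must use that $c_i\le a_i\le p-1$ is exactly the constraint making the base-$p$ representation unique, so that no cross terms from different index tuples can also contribute to $x^k$. Everything else is routine manipulation in $\mathbb{F}_p[x]$. As a sanity check, the argument transparently recovers the usual corollary that $\binom nk\equiv 0\pmod p$ whenever some base-$p$ digit $b_i$ of $k$ exceeds the corresponding digit $a_i$ of $n$, since then the factor $\binom{a_i}{b_i}$ vanishes.
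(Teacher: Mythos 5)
Your proof is correct and complete: the reduction to $(1+x)^{p^i}\equiv 1+x^{p^i}\pmod p$, the factorisation $(1+x)^n\equiv\prod_i(1+x^{p^i})^{a_i}$, and the digit-matching argument via uniqueness of base-$p$ representations (including the degenerate case where some $b_i>a_i$, so that no tuple contributes and both sides vanish) are all handled properly. Note, however, that the paper does not prove this statement at all -- Lucas' theorem is quoted there as a classical result, with references given in lieu of a proof -- so there is no argument in the paper to compare against. What you have written is the standard generating-function proof (essentially Fine's), and it is a perfectly valid way to supply the missing details.
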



\begin{proof}[Proof of Theorem \ref{thm:1}]
Note that $\gcd(p,a)=1$.
By Euler's totient theorem (see \cite{Shanks}), we have
$$
p^{\varphi(a)}-1\equiv 0\pmod a.
$$
Since $a>b>0$, there exists a positive integer $N$ such that $an>bn+\beta>0$
holds for all $n>N$.
Let $r$ be a positive integer such that $p^{r\varphi(a)}-1>aN$,
and let $n=(p^{r\varphi(a)}-1)/a$. Then, by Lucas' theorem, we have
$$
\binom {an} { bn+\beta}=\binom {p^{r\varphi(a)}-1} { bn+\beta}
\equiv \prod_{i=0}^m \binom {p-1} { b_i}\equiv (-1)^{b_0+\cdots+b_m}\pmod p,
$$
where $bn+\beta=b_0+b_1p+\cdots b_m p^m$ with $0\leqslant
b_0,\ldots,b_m\leqslant p-1$.
It is clear that there are infinitely many such $r$ and $n$. This
completes the proof.
\end{proof}

\begin{proof}[Proof of Theorem \ref{thm:2}]
Suppose that $a$ and $b$ are positive integers and
$p$ a prime such that $p\mid a$ but $p\nmid b$.
We have the decomposition
\begin{align}
\frac{1}{bn+1}\binom {an+bn} { an}
=\binom {an+bn} { an-1}-\frac{a+b}{a}\binom {an+bn-1} {
  an-2}.  \label{eq:anbn3}
\end{align}
It is clear that $p\nmid (a+b)$. By the proof of Theorem~\ref{thm:1},
if we take $n=(p^{r\varphi(a+b)}-1)/(a+b)$ ($r\geqslant 1$), then
$$
\binom {an+bn} { an-1}\equiv \pm 1\pmod p,
$$
and thus
\begin{align}
(a+b)\binom {an+bn-1} { an-2}=\frac{}{}\frac{an-1}{n}\binom {an+bn} { an-1}
\equiv\pm (a+b)\not\equiv 0\pmod p.  \label{eq:notdiv}
\end{align}
Combining \eqref{eq:anbn3} and \eqref{eq:notdiv} gives
$$
\frac{1}{bn+1}\binom {an+bn} { an}\not\in\mathbb{Z}
$$
for all $n=(p^{r\varphi(a+b)}-1)/(a+b)$ ($r=1,2,\ldots$). Namely,
Conjecture~\ref{conj} holds
and
$$
f(a,b)\leqslant \frac{p^{\varphi(a+b)}-1}{a+b},
$$
as desired.
\end{proof}

\section{Proofs of Theorems~\ref{thm:4}--\ref{thm:anbn} and
 of Corollary~\ref{cor:anbn}}
\label{sec:Proof2}

All the proofs in this section are similar in spirit.
They all draw on a lemma from
\cite[Proposition~10.1.(iii)]{ReSWAA}, which extracts the essentials
out of an argument of Andrews \cite[Proof of Theorem~2]{AndrCB}.
(To be precise, Lemma~\ref{lem:RSW} below is a slight generalisation
of \cite[Proposition~10.1.(iii)]{ReSWAA}. However, the proof from
\cite{ReSWAA} works also for this generalisation. We provide it
here for the sake of completeness.)
Recall that a  polynomial
$P(q)=
\sum _{i=0} ^{d}p_iq^i$ in $q$ of degree $d$ is called {\it reciprocal\/} if
$p_i=p_{d-i}$ for all $i$, and that it
is called {\it unimodal\/} if there is an
integer $r$ with $0\le r\le d$ and $0\le p_0\le\dots\le p_r\ge\dots\ge
p_d\ge0$.

\begin{lem} \label{lem:RSW}
Let $P(q)$ be a reciprocal and unimodal
polynomial and $m$ and $n$ positive integers with $m\le n$.
Furthermore, assume that $A(q)=\frac {1-q^m} {1-q^n}P(q)$ is a polynomial
in $q$. Then $A(q)$ has non-negative coefficients.
\end{lem}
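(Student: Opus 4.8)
The plan is to read off the coefficients of $A(q)$ from a simple one-step recurrence and then to control their signs by exploiting two symmetries. Set $d=\deg P$ and $p_j=0$ for $j\notin\{0,\dots,d\}$, write $(1-q^m)P(q)=\sum_t b_tq^t$ (so that $b_t=p_t-p_{t-m}$) and $A(q)=\sum_k a_kq^k$. The hypothesis that $A(q)$ is a polynomial is just the statement $(1-q^n)A(q)=(1-q^m)P(q)$; comparing coefficients of $q^k$ gives $a_k=a_{k-n}+b_k$, and since $a_k=0$ for $k<0$ this unwinds to $a_k=\sum_{j\ge0}b_{k-jn}$. Thus everything reduces to understanding the signs of the partial sums of the sequence $(b_t)$ taken along an arithmetic progression with common difference $n$.

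I would then prove two facts. First, $A(q)$ is a \emph{reciprocal} polynomial of degree $D:=d+m-n$, i.e.\ $a_k=a_{D-k}$: substituting $q\mapsto1/q$ in $A(q)=\frac{1-q^m}{1-q^n}P(q)$ and multiplying by $q^D$, the three factors $q^D$, $\frac{1-q^{-m}}{1-q^{-n}}=q^{n-m}\frac{1-q^m}{1-q^n}$, and $P(1/q)=q^{-d}P(q)$ (the last using reciprocity of $P$) recombine to give $A(q)$ back. Second, $b_t\ge0$ for every $t\le(d+m)/2$. For this one uses that a reciprocal \emph{and} unimodal $P$ satisfies $p_i\le p_j$ exactly when $\min(i,d-i)\le\min(j,d-j)$; for $t\le(d+m)/2$ the index $t$ is then at least as close to the centre $d/2$ as $t-m$ is (or else $t-m<0$, where $p_{t-m}=0$), so $p_t\ge p_{t-m}$, that is, $b_t\ge0$.

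These two facts finish the proof at once. By reciprocity it is enough to check $a_k\ge0$ for $k\le D/2$. For such $k$ and any $j\ge0$ we have $k-jn\le k\le D/2=(d+m)/2-n/2<(d+m)/2$, so each summand $b_{k-jn}$ is $\ge0$ by the second fact, whence $a_k=\sum_{j\ge0}b_{k-jn}\ge0$. The indices $k>D/2$ are then covered by $a_k=a_{D-k}$, and coefficients of degree $>D$ vanish. (If $P\equiv0$ there is nothing to prove; otherwise $p_0=p_d>0$, so $A\not\equiv0$ and $D\ge0$, which makes the above meaningful.)

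The one step requiring genuine care is the second fact --- translating ``$P$ reciprocal and unimodal'' into the monotonicity inequality $p_t\ge p_{t-m}$ for $t\le(d+m)/2$. This is a short case analysis: split according to whether $t\le d/2$, $d/2<t\le d$, or $t-m<0$, observing that these cases exhaust the relevant range and that in the middle case the inequality $d-t\ge t-m$ is exactly $t\le(d+m)/2$. Everything else is essentially formal; note in particular that the hypothesis $m\le n$ is nowhere used in this argument and could be dropped.
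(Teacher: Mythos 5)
Your proof is correct and follows essentially the same route as the paper's: non-negativity of the coefficients of $(1-q^m)P(q)$ in the lower half of the degree range (coming from reciprocality plus unimodality of $P$), propagation of this through the geometric series for $1/(1-q^n)$, and reciprocality of $A(q)$ to cover the upper half. Your write-up is merely more explicit, and your sharper observation that $b_t\ge0$ for all $t\le(d+m)/2$ correctly shows the hypothesis $m\le n$ to be dispensable, whereas the paper invokes $m\le n$ only to guarantee $\deg A\le\deg P$.
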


\begin{proof}
Since $P(q)$ is unimodal, the
coefficient of $q^k$ in $(1-q^m)P(q)$ is non-negative for
$0\le k\le \deg(P)/2$. Consequently, the same must be true
for $A(q)=\frac {1-q^m} {1-q^n}P(q)$, considered as a formal power
series in $q$. However, also $A(q)$ is reciprocal, and its degree
is at most the degree of $P(q)$. Therefore the remaining coefficients
of $A(q)$ must also be non-negative.
\end{proof}

\begin{proof}[Proof of Theorem~\ref{thm:4}]
In view of Lemma~\ref{lem:RSW} and the
well-known reciprocality and unimodality of
$q$-binomial coefficients
(cf.\ \cite[Ex.~7.75.d]{Stanley}), for proving Theorem~\ref{thm:4} it
suffices to show that the expressions in \eqref{eq:6n-1} and
\eqref{eq:10n15n} are polynomials in $q$. We are going to accomplish
this by a count of the cyclotomic polynomials which divide numerators
and denominators of these expressions, respectively.

\medskip
We begin by showing that
$\frac {1-q} {1-q^{6n-1}}\left[\smallmatrix
12n\\3n\endsmallmatrix\right]_q$ is a polynomial in $q$.
We recall the well-known fact that
$$
q^n-1=\prod _{d\mid n} ^{}\Phi_d(q),
$$
where $\Phi_d(q)$ denotes the $d$-th cyclotomic polynomial in $q$.
Consequently,
$$
\frac {1-q} {1-q^{6n-1}}\begin{bmatrix} 12n\\3n\end{bmatrix}_q=
\prod _{d=2} ^{12n}\Phi_d(q)^{e_d},
$$
with
\begin{equation} \label{eq:SH}
e_d=-\chi\big(d\mid (6n-1)\big)
+\fl{\frac {12n} {d}}
-\fl{\frac {3n} {d}}
-\fl{\frac {9n} {d}},
\end{equation}
where $\chi(\mathcal S)=1$ if $\mathcal S$ is
true and $\chi(\mathcal S)=0$ otherwise.
This is clearly non-negative, unless
$d\mid (6n-1)$.

So, let us assume that $d\mid (6n-1)$, which in particular means
that $d\ge5$. Let us write
$X=\{3n/d\}$,
where $\{\al\}:=\al-\fl{\al}$ denotes the fractional
part of $\al$.
Using this notation, Equation~\eqref{eq:SH} becomes
$$e_d=-\chi\big(d\mid (2dX-1)\big)+\fl{4X}-\fl{3X}.$$
Since $0\le X<1$, we have $-1\le 2dX-1<2d$.
The only integers which are divisible by $d$ in the range
$-1,0,\dots,2d-1$ are $0$ and $d$. Hence, we must have $X=1/(2d)$
or $X=(d+1)/(2d)$. The former is impossible since $X$ is a rational
number which can be written with denominator $d$. Thus, the only
possibility left is $X=(d+1)/(2d)$. For this choice, it follows that
$\fl{4X}-\fl{3X}=2-1=1$. (Here we used that $d\ge5$.)
This proves that $e_d$ is non-negative also in this case,
and completes the proof of polynomiality of
$\frac {1-q} {1-q^{6n-1}}\left[\smallmatrix
12n\\3n\endsmallmatrix\right]_q$.

\medskip
The proof of polynomiality of
$\frac {1-q} {1-q^{6n-1}}\left[\smallmatrix
12n\\4n\endsmallmatrix\right]_q$ is completely analogous and therefore
left to the reader.

\medskip
We next turn our attention to
$\frac {1-q} {1-q^{30n-1}}\left[\smallmatrix
60n\\6n\endsmallmatrix\right]_q$. Again, we write
$$
\frac {1-q} {1-q^{30n-1}}\begin{bmatrix} 60n\\6n\end{bmatrix}_q=
\prod _{d=2} ^{60n}\Phi_d(q)^{e_d},
$$
with
\begin{equation} \label{eq:SH2}
e_d=-\chi\big(d\mid (30n-1)\big)
+\fl{\frac {60n} {d}}
-\fl{\frac {6n} {d}}
-\fl{\frac {54n} {d}}.
\end{equation}
This is clearly non-negative, unless
$d\mid (30n-1)$.

We assume $d\mid (30n-1)$ and note that this implies $d=7$ or $d\ge
11$. Here, we write $X=\{6n/d\}$.
Using this notation, Equation~\eqref{eq:SH2} becomes
$$e_d=-\chi\big(d\mid (5dX-1)\big)+\fl{10X}-\fl{9X}.$$
Since $0\le X<1$, we have $d\mid (5dX-1)$ if and only if
$X$ is one of
$$
\frac {1} {5d},\
\frac {1} {5}+\frac {1} {5d},\
\frac {2} {5}+\frac {1} {5d},\
\frac {3} {5}+\frac {1} {5d},\
\frac {4} {5}+\frac {1} {5d}.
$$
For the same reason as before, the option $X=1/(5d)$ is impossible.
For the other options, the corresponding value of $\fl{10X}-\fl{9X}$
is always $1$, except if $X=\frac {1} {5}+\frac {1} {5d}$ and $d=7$.
However, in that case, we have $X=\frac {8} {35}$, which cannot be
written with denominator $d=7$. Therefore this case can actually
not occur. This completes the proof that $e_d$ is non-negative
for all $d\ge2$, and, hence, that
$\frac {1-q} {1-q^{30n-1}}\left[\smallmatrix
60n\\6n\endsmallmatrix\right]_q$ is a polynomial in $q$.

\medskip
Proceeding in the same style,
for the proof of polynomiality of
$\frac {1-q} {1-q^{30n-1}}\left[\smallmatrix
120n\\40n\endsmallmatrix\right]_q$ we have to show that
\begin{equation} \label{eq:120n40n}
e_d=-\chi\big(d\mid (3dX-1)\big)+\fl{12X}-\fl{4X}-\fl{8X}
\end{equation}
is non-negative for all
$X$ of the form
$X=x/d$ with $0\le x<d$,
$x$ being integral,
and $d\ge2$.
Clearly, the expression in \eqref{eq:120n40n}
is non-negative, except possibly if $d\mid (3dX-1)$. With the same
reasoning as before, we see that the only cases to be examined are
$X=(d+1)/(3d)$ and $X=(2d+1)/(3d)$, where $d=7$ or $d\ge11$.
If $X=(d+1)/(3d)$, we have
$$
\fl{12X}-\fl{4X}-\fl{8X}=4-1-\fl{\frac {8} {3}+\frac {8} {3d}}.
$$
So, this will be equal to $1$, except if $d=7$. However, in that
case we have $X=\frac {8} {21}$, which cannot be written with
denominator $d=7$, a contradiction. Similarly, if $X=(2d+1)/(3d)$,
we have
$$
\fl{12X}-\fl{4X}-\fl{8X}=8-2-5=1.
$$
So, again, the exponent $e_d$ in \eqref{eq:120n40n} is non-negative,
which establishes that
$\frac {1-q} {1-q^{30n-1}}\left[\smallmatrix
120n\\40n\endsmallmatrix\right]_q$ is a polynomial in $q$.

\medskip
For the proof of polynomiality of
$\frac {1-q} {1-q^{66n-1}}\left[\smallmatrix
330n\\88n\endsmallmatrix\right]_q$ we have to show that
\begin{equation} \label{eq:330n88n}
e_d=-\chi\big(d\mid (3dX-1)\big)+\fl{15X}-\fl{4X}-\fl{11X}
\end{equation}
is non-negative for all
$X$ of the form
$X=x/d$ with $0\le x<d$,
$x$ being integral,
and $d\ge2$.
Clearly, the expression in \eqref{eq:330n88n}
is non-negative, except possibly if $d\mid (3dX-1)$. With the same
reasoning as before, we see that the only cases to be examined are
$X=(d+1)/(3d)$ and $X=(2d+1)/(3d)$, where $d=5$, $d=7$, or
$d\ge13$.
If $X=(d+1)/(3d)$, we have
$$
\fl{15X}-\fl{4X}-\fl{11X}=5+\fl{\frac {5} {d}}-1
-\fl{\frac {11} {3}+\frac {11} {3d}}.
$$
So, this will be equal to $1$, except if $d=7$.
However, again, this is an impossible case.
Similarly, if $X=(2d+1)/(3d)$, we have
$$
\fl{15X}-\fl{4X}-\fl{11X}=10+\fl{\frac {5} {d}}
-2
-\fl{\frac {22} {3}+\frac {11} {3d}}=1.
$$
So, again, the exponent $e_d$ in \eqref{eq:330n88n} is non-negative,
which establishes that
$\frac {1-q} {1-q^{66n-1}}\left[\smallmatrix
330n\\88n\endsmallmatrix\right]_q$ is a polynomial in $q$.

\medskip
Turning to \eqref{eq:10n15n}, to prove that
$\frac {(1-q)^2} {(1-q^{10n-1})(1-q^{15n-1})}\left[\smallmatrix
30n\\5n\endsmallmatrix\right]_q$ is a polynomial in $q$, we must show that
\begin{equation} \label{eq:1015}
e_d=-\chi\big(d\mid (2dX-1)\big)-\chi\big(d\mid (3dX-1)\big)
+\fl{6X}-\fl{5X}
\end{equation}
is non-negative for all
$X$ of the form
$X=x/d$ with $0\le x<d$,
$x$ being integral, and $d\ne5$.

First of all, we should observe that $\gcd(10n-1,15n-1)=1$, whence
the two truth functions in \eqref{eq:1015} cannot equal $1$
simultaneously.
Therefore, the expression in \eqref{eq:1015}
is non-negative, except possibly if $d\mid (2dX-1)$ or if
$d\mid (3dX-1)$. With the same
reasoning as before, we see that the only cases to be examined are
$X=(d+1)/(2d)$, $X=(d+1)/(3d)$, and $X=(2d+1)/(3d)$, where, in the
latter two cases, the choice of $d=3$ is excluded.
If $X=(d+1)/(2d)$, then
$$
\fl{6X}-\fl{5X}=3+\fl{\frac {3} {d}}
-\fl{\frac {5} {2}+\frac {5} {2d}},
$$
which always equals $1$ for $d\ge2$.
If $X=(d+1)/(3d)$, then
$$
\fl{6X}-\fl{5X}=2+\fl{\frac {2} {d}}
-\fl{\frac {5} {3}+\frac {5} {3d}},
$$
which always equals $1$ for $d=2,6,7,\dots$ (sic!).
Because of our assumptions, we do not need to consider the cases
$d=3$ and $d=5$, so the only remaining case is $d=4$. However, in
that case $X=\frac {5} {12}$, which cannot be written with denominator
$d=4$, a contradiction.
Finally, if $X=(2d+1)/(3d)$, then
$$
\fl{6X}-\fl{5X}=4+\fl{\frac {2} {d}}
-\fl{\frac {10} {3}+\frac {5} {3d}},
$$
which always equals $1$ for $d\ge2$.

So, again, the exponent $e_d$ in \eqref{eq:1015} is non-negative
in all cases,
which establishes that
$\frac {(1-q)^2} {(1-q^{10n-1})(1-q^{15n-1})}\left[\smallmatrix
30n\\15n\endsmallmatrix\right]_q$ is a polynomial in $q$.
\end{proof}

\begin{proof}[Proof of Theorem~\ref{thm:(k,n)}]
By Lemma~\ref{lem:RSW}, it suffices to establish polynomiality of
\eqref{eq:(k,n)}. When written in terms of cyclotomic polynomials,
Expression \eqref{eq:(k,n)} reads
$$
\frac {1-q^{\gcd(k,n)}} {1-q^n}\bmatrix 2n\\n-k\endbmatrix_q
=\prod _{d=2} ^{2n}\Phi_d(q)^{e_d},
$$
with
\begin{equation} \label{eq:n+k/n-k}
e_d=\chi(d\mid \gcd(k,n))-\chi(d\mid n)
+\fl{\frac {2n} {d}}
-\fl{\frac {n-k} {d}}
-\fl{\frac {n+k} {d}}.
\end{equation}
Similarly as before, let us write
$N=\{n/d\}$ and
$K=\{k/d\}$.
Using this notation, Equation~\eqref{eq:n+k/n-k} becomes
\begin{equation} \label{eq:N+K}
e_d=\chi(d\mid \gcd(k,n))-\chi(d\mid n)+\fl{2N}-\fl{N-K}-\fl{N+K}.
\end{equation}
We have to distinguish several cases. If $d\mid n$, then
$N=0$, and \eqref{eq:N+K} becomes
$$e_d=\chi(d\mid k)-1-\fl{-K}.
$$
We see that this is zero (and, hence, non-negative)
regardless whether $d\mid k$ or not.

On the other hand, if we assume that $d\nmid n$, then \eqref{eq:N+K} becomes
$$e_d=\fl{2N}-\fl{N-K}-\fl{N+K},
$$
and this is always non-negative. We have proven that \eqref{eq:(k,n)}
is indeed a polynomial in $q$.

\medskip
The statement on $B_{n,k}(q)$ follows immediately from
the previous result and
the fact that $\gcd(k,n)\mid k$.
\end{proof}

Finally,
Theorem~\ref{thm:anbn} will follow immediately from the following strengthening
of a non-negativity result of Andrews
\cite[Theorem~2]{AndrCB}.

\begin{lem} \label{lem:Andrews}
Let $a$ and $b$ be positive integers.
Then
\begin{equation} \label{eq:a+b}
\frac {1-q^{\gcd(a,b)}} {1-q^{a+b}}\left[\begin{matrix}
a+b\\a\end{matrix}\right]_q
\end{equation}
is a polynomial in $q$ with non-negative
integer coefficients.
\end{lem}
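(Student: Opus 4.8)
The plan is to follow the same two-step template used for the other proofs in this section: first reduce the non-negativity assertion to a bare polynomiality assertion via Lemma~\ref{lem:RSW}, and then verify polynomiality by counting cyclotomic factors. Since $\gcd(a,b)\le a+b$ and since $\begin{bmatrix} a+b\\ a\end{bmatrix}_q$ is reciprocal and unimodal (cf.\ \cite[Ex.~7.75.d]{Stanley}), Lemma~\ref{lem:RSW}, applied with $m=\gcd(a,b)$, $n=a+b$ and $P(q)=\begin{bmatrix} a+b\\ a\end{bmatrix}_q$, shows that it is enough to prove that \eqref{eq:a+b} is a polynomial in $q$. Using $q^N-1=\prod_{d\mid N}\Phi_d(q)$, I would write
\[
\frac {1-q^{\gcd(a,b)}} {1-q^{a+b}}\begin{bmatrix} a+b\\ a\end{bmatrix}_q
=\prod_{d=2}^{a+b}\Phi_d(q)^{e_d},\qquad
e_d=\chi\big(d\mid\gcd(a,b)\big)-\chi\big(d\mid(a+b)\big)
+\fl{\frac {a+b}{d}}-\fl{\frac {a}{d}}-\fl{\frac {b}{d}},
\]
so the task reduces to showing $e_d\ge0$ for every $d\ge2$. (As in the other proofs, $\Phi_1$ may be ignored: it occurs once in the numerator $1-q^{\gcd(a,b)}$, once in the denominator $1-q^{a+b}$, and not at all in $\begin{bmatrix} a+b\\ a\end{bmatrix}_q$, so it cancels, which is why the product is started at $d=2$.)

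The verification that $e_d\ge0$ is short. If $d\nmid(a+b)$ then $\chi\big(d\mid(a+b)\big)=0$, the term $\chi\big(d\mid\gcd(a,b)\big)$ is $\ge0$, and $\fl{(a+b)/d}-\fl{a/d}-\fl{b/d}\in\{0,1\}$, so $e_d\ge0$. If $d\mid(a+b)$, put $r=a-\fl{a/d}d$ and $s=b-\fl{b/d}d$, so $0\le r,s<d$ and $r+s\equiv0\pmod d$, whence either $r=s=0$ or $r+s=d$. In the first case $d$ divides both $a$ and $b$, hence $\gcd(a,b)$, so $\chi\big(d\mid\gcd(a,b)\big)=1$ while the floor terms contribute $0$, giving $e_d=1-1+0=0$. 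In the second case $d\nmid a$, so $\chi\big(d\mid\gcd(a,b)\big)=0$, whereas $\fl{(a+b)/d}=\fl{a/d}+\fl{b/d}+1$, again giving $e_d=0$. Thus $e_d\ge0$ in all cases; this establishes polynomiality, and then Lemma~\ref{lem:RSW} yields the non-negativity of the coefficients.

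I do not anticipate a genuine obstacle here; the argument is in fact simpler than the corresponding steps in the proof of Theorem~\ref{thm:4}. The one point worth isolating is the elementary observation that $d\mid(a+b)$ together with $d\nmid a$ forces $\{a/d\}+\{b/d\}=1$, which is exactly what guarantees that each ``dangerous'' denominator factor $\Phi_d$ (those $d$ with $d\mid(a+b)$) is absorbed by the $q$-binomial coefficient itself rather than by the numerator factor $1-q^{\gcd(a,b)}$; no residual case-checking of the kind needed in Theorem~\ref{thm:4} (ruling out fractions that cannot be written with denominator $d$) arises. For completeness I note that Lemma~\ref{lem:Andrews} immediately implies Theorem~\ref{thm:anbn}, by specialising $(a,b)\mapsto(an,bn+1)$ and using the identity $\begin{bmatrix} an+bn+1\\ an\end{bmatrix}_q=\frac{1-q^{an+bn+1}}{1-q^{bn+1}}\begin{bmatrix} an+bn\\ an\end{bmatrix}_q$; Theorem~\ref{thm:0} is then the special case $q\to1$.
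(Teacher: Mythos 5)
Your proof is correct and follows essentially the same route as the paper: reduce non-negativity to polynomiality via Lemma~\ref{lem:RSW} and then check that each cyclotomic exponent $e_d$ is non-negative. The only cosmetic difference is that the paper absorbs your term $-\chi\big(d\mid(a+b)\big)+\fl{(a+b)/d}$ into the single floor $\fl{(a+b-1)/d}$, which makes the same case analysis a one-liner.
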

\begin{proof}
In view of Lemma~\ref{lem:RSW}, it suffices to show that
the expression in \eqref{eq:a+b} is a polynomial in $q$.
Again, we start with the factorisation
$$
\frac {1-q^{\gcd(a,b)}} {1-q^{a+b}}\begin{bmatrix} a+b\\a\end{bmatrix}_q=
\prod _{d=2} ^{a+b-1}\Phi_d(q)^{e_d},
$$
with
\begin{equation} \label{eq:SHab}
e_d=\chi(d\mid \gcd(a,b))
+\fl{\frac {a+b-1} {d}}
-\fl{\frac {a} {d}}
-\fl{\frac {b} {d}}.
\end{equation}
Next we write
$A=\{a/d\}$ and
$B=\{b/d\}$.
Using this notation, Equation~\eqref{eq:SHab} becomes
$$e_d=\chi(d\mid \gcd(a,b))+\fl{A+B-\frac {1} {d}}.$$
This is clearly non-negative, unless
$A=B=0$. However, in that case we have $d\mid a$ and $d\mid b$,
that is, $d\mid\gcd(a,b)$, so that $e_d$ is non-negative
also in this case.
\end{proof}

\begin{proof}[Proof of Theorem~\ref{thm:anbn}]
Replace $a$ by $an$ and $b$ by $bn+1$ in
Lemma~\ref{lem:Andrews}.
\end{proof}

\begin{proof}[Proof of Corollary~\ref{cor:anbn}]
This follows immediately from Theorem~\ref{thm:anbn} and
the fact that
$a\mid \gcd(a,bn+1)=\gcd(an,bn+1)$.
\end{proof}

\section{Proof of Theorem~\ref{conj:2}}
\label{sec:Proof?}

The following auxiliary result on the occurrence of prime numbers
congruent to~2 modulo~3 in ``small" intervals will be crucial.

\begin{lem} \label{lem:p2}
If $x\ge530$, there is always at least one prime number congruent
to~$2$ modulo~$3$ contained in the interval $(x,\frac {20} {19}x)$.
\end{lem}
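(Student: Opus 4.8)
The plan is to recast the lemma as an upper bound on gaps between consecutive primes in the residue class $2$ modulo $3$, and then to split the range of $x$ into a large range, handled by an explicit form of the prime number theorem for arithmetic progressions, and a small range, handled by direct computation. Precisely, for real $x$ let $q(x)$ denote the least prime $p\equiv 2\pmod 3$ with $p>x$. If $p<p'$ are two consecutive primes $\equiv 2\pmod 3$, then $q(x)=p'$ for all $x\in[p,p')$; hence the lemma is equivalent to the statement that every such pair with $p\ge 530$ satisfies $p'<\frac{20}{19}p$, together with the trivial observation that the least prime $\equiv 2\pmod 3$ exceeding $530$ is $557$ and $557<\frac{20}{19}\cdot 530$. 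In other words, it suffices to prove $p'-p<\frac1{19}p$ for consecutive primes $\equiv 2\pmod 3$ once $p\ge 530$.

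For $x\ge x_1$, with $x_1$ a suitable explicit threshold, I would invoke an explicit bound of the form $\bigl|\theta(x;3,2)-\frac x2\bigr|\le\delta\cdot\frac x2$, where $\theta(x;3,2)$ denotes the sum of $\log p$ over primes $p\le x$ with $p\equiv 2\pmod 3$; such bounds, with explicit admissible pairs $(\delta,x_1)$, are provided by Ramar\'e--Rumely, by McCurley, or, in sharper form, by Bennett--Martin--O'Bryant--Rechnitzer. They yield
$$\theta\bigl(\tfrac{20}{19}x;3,2\bigr)-\theta(x;3,2)\ \ge\ \tfrac x2\Bigl(\tfrac{20}{19}(1-\delta)-(1+\delta)\Bigr)\ =\ \tfrac x2\Bigl(\tfrac1{19}-\tfrac{39}{19}\,\delta\Bigr),$$
which is strictly positive --- indeed bounded below by a positive constant times $x$ --- as soon as $\delta<\frac1{39}$. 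Choosing the explicit estimate with $\delta<\frac1{39}$ fixes $x_1$, and the positivity of this difference forces at least one prime $\equiv 2\pmod 3$ into the open interval $(x,\frac{20}{19}x)$ for every $x\ge x_1$; the linear lower bound comfortably absorbs the endpoint, so the strictness of the interval causes no difficulty.

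It then remains to verify the claim for $530\le x<x_1$, which amounts to listing all primes $\equiv 2\pmod 3$ up to $\frac{20}{19}x_1$ and checking that consecutive such primes $p<p'$ with $p\ge 530$ satisfy $p'<\frac{20}{19}p$; this is a finite computation. I expect the only real obstacle to be quantitative: one must locate an explicit prime-counting estimate whose constant $\delta$ already falls below $\frac1{39}$ at a threshold $x_1$ small enough to make the terminal verification practical --- modern explicit bounds keep $x_1$ quite modest --- and one must track strict versus non-strict inequalities at the seam between the two ranges. The conceptual part, namely the reduction and the positivity computation displayed above, is entirely routine.
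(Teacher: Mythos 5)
Your proposal follows essentially the same route as the paper: an explicit Chebyshev-type estimate for $\theta(x;3,2)$ (the paper uses McCurley's bounds $0.49\,y<\theta(y;3,2)<0.51\,y$, i.e.\ $\delta=0.02<\frac1{39}$, valid with threshold $x_1=3761$) to force a prime $\equiv2\pmod 3$ into $(x,\frac{20}{19}x)$ for large $x$, combined with a finite computer check on $530\le x\le x_1$. Your derivation of the criterion $\delta<\frac1{39}$ and your handling of the open-interval endpoint match the paper's argument, so the proposal is correct and needs only the specific constants filled in.
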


\begin{proof}
Let $\theta(x;3,2)$ denote the classical Chebyshev function, defined by
$$
\theta(x;3,2)=\underset{p\equiv2\ (\text{mod }3)}{\sum_{p\text{
      prime},\ p\le x}}
\log p.
$$
McCurley proved the following estimates for this
function (see \cite[Theorems~5.1 and 5.3]{McCuAA}):
\begin{align*} 
\theta(y;3,2)&<0.51\,y,\quad y>0,\\
\theta(y;3,2)&>0.49\,y,\quad y\ge3761.
\end{align*}
This implies that, for $x>3761$, we have
$$
\theta\left(\tfrac {20} {19}x;3,2\right)
-\theta(x;3,2)>0.49\cdot\tfrac {20} {19}x-0.51\,x
>0.0057\,x>1.
$$
This means that, if $x>3761$,
there must be a prime number congruent to~2 modulo~3
strictly between $x$ and $\frac {20} {19}x$.
(To be completely accurate: the above argument only shows that
such a prime number exists in the half-open interval $(x,\frac {20}
{19}x]$. However, existence in the open interval $(x,\frac {20}
  {19}x)$ can be easily established in the same manner, by slightly
  lowering the value of $\frac {20} {19}$ in the above argument.)

For the remaining
range $530\le x\le 3761$, one can verify the claim directly
using a computer.
\end{proof}

\begin{proof}[Proof of Theorem~\ref{conj:2}]
Given $a$ and $b$, our strategy consists in finding
a prime $p$ and a positive integer $n$ such that the
$p$-adic valuation of $\binom {an+bn} { an}/(3n-1)$
is negative, so that $3n-1$ cannot divide
. $\binom {an+bn} { an}$.
We first verified the possibility of finding such
$p$ and $n$ for $a,b\le1850$ using a computer.

\medskip
To establish the claim for the remaining values of $a$ and $b$,
we have to distinguish several cases, depending on the congruence
classes of $a$ and $b$ modulo~3 and the relative sizes of $a$ and $b$.

\medskip
First let $(a,b)\in
\{(0,0),\,(0,1),\,(1,0),\,(0,2),\,(1,1),\,(2,0)\}+(3\Z)^2$.
By Dirichlet's theorem \cite{DiriAA} (see \cite{AposAA}),
we know that there are infinitely many
primes congruent to $2$ modulo~3. Let us take such a prime $p$
with $p>a+b$, and let $3n-1=p$, that is, $n=(p+1)/3$.
Furthermore, let $v_p(\al)$ denote the $p$-adic valuation of $\al$,
that is, the maximal exponent $e$ such that $p^e$
divides $\al$. Writing $a=3a_1+a_2$ and $b=3b_1+b_2$ with $0\le
a_2,b_2\le2$, by the well-known formula of
Legendre \cite[p.~10]{LegeAA} for the $p$-adic valuation of
factorials, we then have
\begin{align} \notag
v_p\left(\frac {1} {3n-1}\binom {an+bn} { an}\right)
&=-1+\sum_{\ell\ge1}\left(
\fl{\frac {(a+b)n} {p^\ell}}
-\fl{\frac {an} {p^\ell}}
-\fl{\frac {bn} {p^\ell}}
\right)\\
&=-1+
\fl{\frac {a_2+b_2} {3}+
\frac {a+b} {3p}}
-\fl{\frac {a_2} {3}+
\frac {a} {3p}}
-\fl{\frac {b_2} {3}+
\frac {b} {3p}}.
\label{eq:vpab}
\end{align}
Since, in the current case, we have $0\le a_2+b_2\le 2$ and
$\frac {a+b} {3p}<\frac {1} {3}$, all the values of the floor
functions on the right-hand side of the above equation are zero.
Consequently, the $p$-adic valuation of $\frac {1} {3n-1}\binom
{an+bn} { an}$ equals $-1$ for our choice of $p$ and $n$, which in particular
means that $\frac {1} {3n-1}\binom {an+bn} { an}$ is not an integer.

\medskip
For the next case, consider a pair $(a,b)\in
\{(2,1),\,(2,2)\}+(3\Z)^2$. Let us first
assume that $b\le \frac {9} {10}a$.
Since we have already verified the claim for
$a,b\le 1850$, we may assume $a\ge1850$.
We now choose a prime
$p\equiv2$~(mod~$3$)
strictly between $\frac {19}
{20}a$ and $a$.
Such a prime is guaranteed to exist by Lemma~\ref{lem:p2}, because,
due to our assumption, we have $\frac {19} {20}a\ge1757.5>530$.
Furthermore, we choose $n=(p+1)/3$. We have
$$
(a+b)n\le \frac {19} {10}a \frac {p+1} {3}<\frac {2} {3}p(p+1)<p^2,
$$
and hence,
with the same notation as above, Equation~\eqref{eq:vpab} holds also
in the current case.
We have $3\le a_2+b_2\le 4$,
${a+b}\le\frac {19} {10}a<2p$, $\frac {a_2} {3}=\frac
{2} {3}$,
$\frac {1} {3}<\frac {a} {3p}\le\frac {20} {57}<\frac {2} {3}$,
hence
\begin{equation*} 
\fl{\frac {a_2+b_2} {3}+
\frac {a+b} {3p}}
-\fl{\frac {a_2} {3}+
\frac {a} {3p}}
-\fl{\frac {b_2} {3}+
\frac {b} {3p}}=1-1-0=0.
\end{equation*}
Consequently, again, the $p$-adic valuation of $\frac {1} {3n-1}\binom
{an+bn} { an}$ equals $-1$ for our choice of $p$ and $n$, which in particular
means that $\frac {1} {3n-1}\binom {an+bn} { an}$ is not an integer.

\medskip
Next let again $(a,b)\in
\{(2,1),\,(2,2)\}+(3\Z)^2$, but
$\frac {9} {10}a<b\le \frac {7} {5}a$.
Since we have already verified the claim for
$a,b\le 1850$, we may assume $a\ge1300$.
(If $a<1300$, then the above restriction imposes the bound
$b\le \frac {7} {5}1300=1820<1850$.)
Here, we choose a prime
$p\equiv2$~(mod~$3$)
strictly between $\frac {4}
{5}a$ and $\frac {9} {10}a$.
Such a prime is guaranteed to exist by Lemma~\ref{lem:p2}, because,
due to our assumption, we have $\frac {4} {5}a=1040>530$.
Furthermore, we choose $n=(p+1)/3$. We have still
$3\le a_2+b_2\le 4$, moreover
$2p<\frac {19} {9}p<\frac {19} {10}a\le{a+b}\le\frac {12} {5}a<
3p$, $\frac {a_2} {3}=\frac {2} {3}$,
$\frac {1} {3}<\frac {a} {3p}\le\frac {5} {12}<\frac {2} {3}$,
$\frac {1} {3}<\frac {3a} {10p}\le\frac {b} {3p}\le\frac {7a} {15p}
\le\frac {7} {12}<\frac {2} {3}$,
hence
$$
\fl{\frac {a_2+b_2} {3}+
\frac {a+b} {3p}}
-\fl{\frac {a_2} {3}+
\frac {a} {3p}}
-\fl{\frac {b_2} {3}+
\frac {b} {3p}}=1+\chi(b_2=2)-1-\chi(b_2=2)=0,
$$
implying again that
the $p$-adic valuation of $\frac {1} {3n-1}\binom
{an+bn} { an}$ equals $-1$ for our choice of $p$ and $n$, as
desired.

\medskip
The next case we discuss is $(a,b)\in
\{(2,1)\}+(3\Z)^2$ and
$\frac {7} {5}a<b\le 3a$.
Since we have already verified the claim for
$a,b\le 1850$, we may assume $b\ge1850$.
Here, we choose a prime
$p\equiv2$~(mod~$3$)
strictly between $\frac {3}
{10}b$ and $\frac {1} {3}b$.
Such a prime is guaranteed to exist by Lemma~\ref{lem:p2}, because,
due to our assumption, we have $\frac {3} {10}b=555>530$.
Furthermore, we choose $n=(p+1)/3$. Here we have
\begin{align*} \notag
v_p\left(\frac {1} {3n-1}\binom {an+bn} { an}\right)
&=-1+\sum_{\ell\ge1}\left(
\fl{\frac {(a+b)n} {p^\ell}}
-\fl{\frac {an} {p^\ell}}
-\fl{\frac {bn} {p^\ell}}
\right)\\
&=-1+\fl{\frac {(a+b)n} {p^2}}
-\fl{\frac {an} {p^2}}
-\fl{\frac {bn} {p^2}}\\
&\kern2cm
+
\fl{\frac {a_2+b_2} {3}+
\frac {a+b} {3p}}
-\fl{\frac {a_2} {3}+
\frac {a} {3p}}
-\fl{\frac {b_2} {3}+
\frac {b} {3p}}.
\end{align*}
In this case, due to the estimations
$$p^2<b\frac {p} {3}<(a+b)n=(a+b)\frac {p+1} {3}<\frac {12} {7}b\frac {p+1} {3}<
\frac {40} {21}p(p+1)< 2p^2,\quad \text{for }p>20,$$
and
$$p^2<b\frac {p} {3}<bn<(a+b)n<2p^2,\quad \text{for }p>20,$$
we have
$$
\fl{\frac {(a+b)n} {p^2}}
-\fl{\frac {an} {p^2}}
-\fl{\frac {bn} {p^2}}=1-0-1=0.
$$
Moreover,
we have
$a_2+b_2=3$,
$3p< b<{a+b}\le\frac {12} {7}b<
\frac {120} {21}p<6p$, $\frac {a_2} {3}=\frac {2} {3}$,
$\frac {1} {3}<\frac {b} {9p}\le\frac {a} {3p}\le
\frac {5b} {21p}<\frac {50} {63}$, $\frac {b_2} {3}=\frac {1} {3}$,
$1<\frac {b} {3p}\le\frac {10} {9}$,
so that
$$
\fl{\frac {a_2+b_2} {3}+
\frac {a+b} {3p}}
-\fl{\frac {a_2} {3}+
\frac {a} {3p}}
-\fl{\frac {b_2} {3}+
\frac {b} {3p}}=2-1-1=0,
$$
implying again that
the $p$-adic valuation of $\frac {1} {3n-1}\binom
{an+bn} { an}$ equals $-1$ for our choice of $p$ and $n$, as
desired.

\medskip
The last case to be discussed is $(a,b)\in
\{(2,1)\}+(3\Z)^2$ and
$3a<b$.
Again, since we have already verified the claim for
$a,b\le 1850$, we may assume $b\ge1850$.
Here, we choose a prime
$p\equiv2$~(mod~$3$)
strictly between $\frac {5}
{11}b$ and $\frac {1} {2}b$.
Such a prime is guaranteed to exist by Lemma~\ref{lem:p2}, because,
due to our assumption, we have $\frac {5} {11}b>530$.
Furthermore, we choose $n=(p+1)/3$. Since
$$
(a+b)n<\frac {4} {3}b\frac {p+1} {3}<\frac {44} {45}p(p+1)<p^2,\quad
\text{for }p>44,
$$
for the $p$-adic valuation of $\frac {1} {3n-1}\binom
{an+bn} { an}$ there holds again \eqref{eq:vpab}.
In the current case, we have furthermore
$a_2+b_2=3$,
${a+b}\le\frac {4} {3}b<
\frac {44} {15}p<3p$, $\frac {a_2} {3}=\frac {2} {3}$,
$\frac {a} {3p}<\frac {b} {9p}<\frac {11} {45}<\frac {1} {3}$,
$\frac {b_2} {3}=\frac {1} {3}$,
$\frac {2} {3}<\frac {b} {3p}\le\frac {11} {15}$,
and hence
$$
\fl{\frac {a_2+b_2} {3}+
\frac {a+b} {3p}}
-\fl{\frac {a_2} {3}+
\frac {a} {3p}}
-\fl{\frac {b_2} {3}+
\frac {b} {3p}}=1-0-1=0,
$$
implying also here that
the $p$-adic valuation of $\frac {1} {3n-1}\binom
{an+bn} { an}$ equals $-1$ for our choice of $p$ and $n$, as
desired.

\medskip
We have now covered all possible cases (in particular,
by symmetry in $a$ and $b$, we also covered the case
$(a,b)\in \{(1,2)\}+(3\Z)^2$), and hence
this concludes the proof of the theorem.
\end{proof}

\section{Concluding remarks and open problems}
\label{sec:open}

In the proof of Theorem~\ref{thm:2}, assume that $s$ is the smallest
positive integer
such that $(a+b)\mid (p^s-1)$. Then we obtain the stronger
inequality
\begin{align}
f(a,b)\leqslant \frac{p^{s}-1}{a+b}. \label{eq:ineq}
\end{align}
It is easily seen that $s\mid \varphi(a+b)$. However, such an upper
bound is still likely much
larger than the exact value of $f(a,b)$ given by Sun \cite{Sun3}.
For example, the inequality \eqref{eq:ineq} gives
\begin{align*}
f(7,36) &\leqslant  \frac{7^{6}-1}{43}=2736, \\[5pt]
f(10,192)&\leqslant \frac{5^{25}-1}{202}=1475362494440362, \\[5pt]
f(11,100)&\leqslant \frac{11^{6}-1}{111}=15960,\\[5pt]
f(22,200)&\leqslant \frac{11^{6}-1}{222}=7980, \\
f(1999,2011)& \leqslant \frac{1999^{400}-1}{4010}\thickapprox
5.272\times 10^{1316}.
\end{align*}

It seems that Theorem~\ref{thm:1} can be further generalised in the
following way.

\begin{conj}
Let $a$ and $b$ be positive integers with $a>b$,
and let $\alpha$ and $\beta$ be integers. Furthermore, let $p$ be
a prime such that $\gcd(p,a)=1$.
Then for each $r=0,1,\ldots,p-1$, there
are infinitely many positive integers $n$ such that
$$
\binom {an+\alpha} { bn+\beta}\equiv r\pmod p.
$$
\end{conj}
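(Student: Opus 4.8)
\emph{Proof proposal.} The plan is to construct $n$ one base-$p$ digit-block at a time and to read off $\binom{an+\alpha}{bn+\beta}\bmod p$ through Lucas' theorem. The basic device is this: if $M$ is large and $c\ge1$, put $n=cp^{M}+n_{1}$ with $0\le an_{1}+\alpha<p^{M}$; then the base-$p$ expansion of $an+\alpha$ is the expansion of $ac$ in the high positions followed by that of $an_{1}+\alpha$ in the low $M$ positions (padded with zeros), and the same holds for $bn+\beta$ (here $a>b$ ensures $bn+\beta<an+\alpha$ for large $n$, so no block is shifted off to the left). Applying Lucas' theorem block by block gives
\[
\binom{an+\alpha}{bn+\beta}\equiv\binom{ac}{bc}\binom{an_{1}+\alpha}{bn_{1}+\beta}\pmod p,
\]
and, iterating with widely separated blocks $c_{1},\dots,c_{s}$ above a bounded tail $n_{1}$,
\[
\binom{an+\alpha}{bn+\beta}\equiv\binom{an_{1}+\alpha}{bn_{1}+\beta}\prod_{i=1}^{s}\binom{ac_{i}}{bc_{i}}\pmod p
\]
for $n=c_{1}p^{M_{1}}+\dots+c_{s}p^{M_{s}}+n_{1}$ with $M_{1}\gg\dots\gg M_{s}$ sufficiently large. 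Keeping $s$, the $c_{i}$, the lower scales $M_{2},\dots,M_{s}$ and $n_{1}$ fixed while letting $M_{1}\to\infty$ produces infinitely many $n$ with the \emph{same} residue; hence it is enough to realise each $r$ once by such a product.

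This turns the statement into a purely multiplicative question. For $r\equiv0$ one needs only a single $c$ with $p\mid\binom{ac}{bc}$, which follows from Kummer's theorem as soon as the base-$p$ addition $bc+(a-b)c$ carries --- easily arranged, and in any case $v_{p}\binom{ac}{bc}$ is unbounded in $c$. For $r\not\equiv0$, one first checks, elementarily (e.g.\ by taking $an_{1}+\alpha$ and $bn_{1}+\beta$ both below $p$, or via Theorem~\ref{thm:1}), that some tail makes $u:=\binom{an_{1}+\alpha}{bn_{1}+\beta}$ a unit mod $p$; it then remains to find $c_{1},\dots,c_{s}$ with $\prod_{i}\binom{ac_{i}}{bc_{i}}\equiv ru^{-1}\pmod p$. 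Thus, absorbing $\alpha$ and $\beta$ into the tail, \emph{the conjecture reduces to the following generation statement}: the residues $\binom{ac}{bc}\bmod p$, for $c=1,2,\dots$, generate the group $(\Z/p\Z)^{\times}$.

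To prove this I would show that the subgroup $H$ they generate cannot lie in any proper subgroup of $(\Z/p\Z)^{\times}$, i.e.\ that for every nontrivial multiplicative character $\chi$ there is a $c$ with $\chi(\binom{ac}{bc})\ne1$. When $a<p$ one may restrict to $c$ all of whose base-$p$ digits are $<p/a$: then there are no carries and $\binom{ac}{bc}\equiv\prod_{j}\binom{aj}{bj}^{m_{j}}$ (with $m_{j}$ the number of digits of $c$ equal to $j$), so $H$ contains the multiplicative span of $\{\binom{aj}{bj}:0\le j<p/a\}$. This suffices for $a$ small relative to $p$ but \emph{not} in general --- for $a=p-1$ one has $\binom{p-1}{b}\equiv(-1)^{b}$ and the no-carry values give only $\{\pm1\}$ --- so one must also use $c$ with carries. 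For $a=p-1$, $b=1$, a short Lucas computation gives $\binom{(p-1)c}{c}\equiv(-1)^{c}\frac12\binom{2c}{c}\pmod p$ for $1\le c\le\frac{p-1}{2}$, which (together with the unit $-1$) reduces that case to: $\{\binom{2c}{c}\bmod p:1\le c\le\frac{p-1}{2}\}$ generates $(\Z/p\Z)^{\times}$. Since this set has about $p/2$ elements it generates unless it lies inside the quadratic residues, so one is left to rule that out --- by producing an explicit central binomial coefficient that is a quadratic non-residue, or by evaluating something like $\prod_{c=1}^{(p-1)/2}\binom{2c}{c}$, or via a Weil-type bound for $\sum_{c}\chi(\binom{2c}{c})$.

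I expect this last point --- that the accessible binomial residues cannot all be squeezed into a proper subgroup of $(\Z/p\Z)^{\times}$ --- to be the genuine obstacle: it is a number-theoretic input about equidistribution (character sums) of central and skew binomial coefficients modulo $p$, not a combinatorial manipulation, and it is presumably why the statement is posed as a conjecture. The case $a\ge p$ also needs extra care, since the no-carry shortcut is then empty and the base-$p$ digits of $ac$ are less transparent; there one would try to isolate a single free digit window in $ac$ (controlling the carries via Kummer's theorem) to again reduce to a small-parameter instance, or pass to a block contributing $\binom{a'}{b'}$ with $a',b'$ reduced. All the remaining ingredients --- the block lemma, the reduction, the $r\equiv0$ case, and the disposal of $\alpha$ and $\beta$ --- are routine.
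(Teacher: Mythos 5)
First, note that the statement you are proving is posed in the paper as an \emph{open conjecture} in the concluding section; the paper contains no proof of it. The only thing actually proved there is the special case Theorem~\ref{thm:1} ($\alpha=0$, residues $\pm1$ only), obtained by choosing $n=(p^{r\varphi(a)}-1)/a$ so that every base-$p$ digit of $an$ equals $p-1$ and Lucas' theorem applies trivially. Your block decomposition of $n$ and the resulting factorisation $\binom{an+\alpha}{bn+\beta}\equiv\binom{an_1+\alpha}{bn_1+\beta}\prod_i\binom{ac_i}{bc_i}\pmod p$ is a natural and correct extension of that device, and the surrounding reductions (infinitude by letting $M_1\to\infty$, the case $r\equiv0$ via Kummer's theorem, absorbing $\alpha$ and $\beta$ into a bounded tail) are sound in outline, up to one secondary point: producing a tail $n_1$ with $\binom{an_1+\alpha}{bn_1+\beta}\not\equiv0\pmod p$ is not literally covered by Theorem~\ref{thm:1} when $\alpha\ne0$ (the analogous choice would require $an_1+\alpha=p^k-1$, i.e.\ $p^k\equiv1+\alpha\pmod a$, which need not be solvable), so that step needs its own argument.

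The genuine gap is the one you flag yourself: after the reduction, everything rests on the assertion that the residues $\binom{ac}{bc}\bmod p$, $c\ge1$ (together with the accessible tails), multiplicatively generate $(\Z/p\Z)^\times$, and you give no proof of this --- only a programme. That is not a routine verification; it is the actual content of the conjecture. Already for $a=2$, $b=1$ your reduction asks whether the central binomial coefficients $\binom{2c}{c}$ could all be quadratic residues modulo $p$; ruling this out for every prime $p$ is an equidistribution question about binomial coefficients in residue classes for which character-sum estimates of the required strength are not available, and your fallback suggestions (exhibiting an explicit non-residue value, evaluating $\prod_c\binom{2c}{c}$, a Weil-type bound) are left entirely unexecuted. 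The case $a\ge p$ is likewise only sketched. So what you have is a plausible and potentially useful reduction of the conjecture to a multiplicative generation statement, not a proof; the conjecture remains open after your argument, which is presumably why the authors state it as such.
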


In relation to Theorem~\ref{thm:3},
we propose the following two conjectures, the first one generalising
Theorem~\ref{conj:2}.

\begin{conj}\label{conj:oddp}
For any odd prime $p$, there are no positive integers $a>b$ such that
$$
\binom {an} { bn}\equiv 0 \pmod {pn-1}
$$
for all $n\geqslant 1$.
\end{conj}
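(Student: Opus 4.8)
The plan is to adapt the proof of Theorem~\ref{conj:2}. Since $\binom{an}{bn}=\binom{an}{(a-b)n}=\binom{(a-b)n+bn}{(a-b)n}$, and since $a-b$ and $b$ run over all pairs of positive integers as $a>b$ does, the conjecture is equivalent to the statement that, for every odd prime $p$, there are no positive integers $A$ and $B$ with $(pn-1)\mid\binom{An+Bn}{An}$ for all $n\ge1$; the case $p=3$ of this is precisely Theorem~\ref{conj:2}. As in the proof of that theorem, the idea is, given $A$ and $B$, to produce a prime $\ell$ and a positive integer $n$ for which the $\ell$-adic valuation of $\binom{An+Bn}{An}/(pn-1)$ is negative. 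One takes a prime $\ell\equiv-1\pmod p$ and sets $n=(\ell+1)/p$, so that $pn-1=\ell$; then it suffices to show $v_\ell\binom{An+Bn}{An}=0$, equivalently (by Kummer's theorem) that adding $An$ and $Bn$ in base $\ell$ produces no carry.

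By Legendre's formula, writing $A=pa_1+a_2$ and $B=pb_1+b_2$ with $0\le a_2,b_2<p$, one obtains for $n=(\ell+1)/p$ that
$$
v_\ell\binom{An+Bn}{An}=\sum_{j\ge1}\left(\fl{\tfrac{(A+B)n}{\ell^j}}-\fl{\tfrac{An}{\ell^j}}-\fl{\tfrac{Bn}{\ell^j}}\right);
$$
all terms with $j\ge2$ vanish as soon as $\ell\ge A+B$ (because then $(A+B)n<\ell^2$), and the $j=1$ term equals
$$
\fl{\tfrac{a_2+b_2}{p}+\tfrac{A+B}{p\ell}}-\fl{\tfrac{a_2}{p}+\tfrac{A}{p\ell}}-\fl{\tfrac{b_2}{p}+\tfrac{B}{p\ell}}.
$$
If $a_2+b_2\le p-1$, then taking any prime $\ell\equiv-1\pmod p$ with $\ell>A+B$ makes every term vanish, so $v_\ell\binom{An+Bn}{An}=0$; since there are infinitely many such $\ell$ by Dirichlet's theorem, the case $a_2+b_2\le p-1$ is settled.

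The substantial case is $p\le a_2+b_2\le 2p-2$ (so that $a_2,b_2\ge1$), in which a large prime $\ell$ forces the $j=1$ term to equal~$1$. Here, just as in the large-$b$ cases of the proof of Theorem~\ref{conj:2}, one instead chooses $\ell$ in a \emph{proportional} interval, of the shape $\ell\in\bigl(\tfrac{A}{2p-a_2},\tfrac{A}{p-a_2}\bigr]$ and also subject to $\ell>\tfrac{B}{p-b_2}$ and $\ell>\tfrac{A+B}{2p-a_2-b_2}$ (after possibly interchanging $A\leftrightarrow B$ and $a_2\leftrightarrow b_2$, which is permitted since $\binom{An+Bn}{An}=\binom{An+Bn}{Bn}$), so that the $j=1$ contribution becomes $1-1-0=0$; for such $\ell$, which are genuinely smaller than $A+B$, one must then also check that the contributions coming from $j=2$ (and occasionally $j=3$) vanish, by estimations entirely analogous to those carried out in Section~\ref{sec:Proof?}. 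To guarantee that this proportional interval contains a prime congruent to $-1$ modulo $p$ once $\max(A,B)$ is sufficiently large, one invokes the analogue of Lemma~\ref{lem:p2} for the primes $\equiv-1\pmod p$, which follows from explicit Chebyshev-type lower bounds for the function $\theta(x;p,p-1)$ in the style of \cite{McCuAA} and its refinements (Ramar\'e--Rumely and others); the finitely many remaining small pairs $(A,B)$ are handled by direct computation, with the threshold for this verification allowed to depend on $p$.

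The main obstacle is to organise this second case uniformly in $p$: there are of the order of $p^2/2$ residue pairs $(a_2,b_2)$ with $a_2+b_2\ge p$, each of which splits into several sub-regimes according to the ratio $B/A$, and the boundary sub-regimes in which the interval above degenerates --- most conspicuously $A=B$ together with $a_2=b_2$, which forces $a_2\ge(p+1)/2$ --- call for a separate treatment, for example by instead arranging $\ell^2\mid pn-1$ and proving $v_\ell\binom{An+Bn}{An}\le1$, or by passing to an interval of the form $\ell\approx A/(\tfrac{3p}{2}-a_2)$. A subsidiary point is that the available explicit estimates on primes in short arithmetic progressions become weaker as $p$ grows, so one has to confirm that the proportional intervals arising in the hard case are always of length at least a fixed positive fraction of $\max(A,B)$ depending only on $p$ --- which they are --- so that these estimates can be applied for all large enough $\max(A,B)$.
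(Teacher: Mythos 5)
First, a point of orientation: this statement is a \emph{conjecture} in the paper, not a theorem. The paper proves only the case $p=3$ (as Theorem~\ref{conj:2}), records in the closing Note that the general case was proved elsewhere, and states explicitly in Section~\ref{sec:open} that, while a proof ``in the style of the proof of Theorem~\ref{conj:2}'' seems feasible for each \emph{specific} prime $p$, a proof for \emph{arbitrary} $p$ ``will likely require a new idea''. Your proposal is precisely that style of proof, and it stalls precisely where the authors predicted. Your reduction to the statement that $(pn-1)\mid\binom{An+Bn}{An}$ fails for some $n$, via $A=a-b$, $B=b$, is correct, and your easy case $a_2+b_2\le p-1$ (take $\ell\equiv-1\pmod p$ with $\ell>A+B$, so that all floor terms vanish and $v_\ell\binom{An+Bn}{An}=0$) is complete. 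But the case $a_2+b_2\ge p$ is the entire substance of the problem, and there you describe a programme rather than carry one out.

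Concretely, three things are missing. First, the window $\bigl(\max(\tfrac{B}{p-b_2},\tfrac{A+B}{2p-a_2-b_2}),\tfrac{A}{p-a_2}\bigr]$ in which the $j=1$ term becomes $1-1-0$ is empty whenever $A/(p-a_2)=B/(p-b_2)$ (not only when $A=B$ and $a_2=b_2$, and swapping $A$ and $B$ does not help there); you flag this and propose two possible fixes but verify neither, and the natural alternative of forcing a $2-1-1$ pattern pushes $(A+B)n$ up to roughly $\ell(\ell+1)$, so the $j=2$ contribution no longer vanishes automatically and must be analysed in each sub-regime --- exactly the delicate case analysis that already occupies most of Section~\ref{sec:Proof?} for $p=3$, with the number of sub-regimes growing with $p$. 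Second, the analogue of Lemma~\ref{lem:p2} for primes $\equiv-1\pmod p$ in an interval $(x,(1+c_p)x)$ needs explicit two-sided bounds on $\theta(x;p,p-1)$; such bounds are available in the literature only for specific small moduli, their constants degrade as $p$ grows, and the admissible $c_p$ shrinks, so nothing is controlled uniformly in $p$. Third, the ``finitely many remaining small pairs handled by direct computation, with the threshold allowed to depend on $p$'' amounts to a computation of unbounded total size over all odd primes; it is legitimate for each fixed $p$ but cannot be a constituent of a proof of the statement for \emph{all} odd $p$. In short, what you have is a credible plan for any single given prime --- essentially the authors' own remark in Section~\ref{sec:open} --- not a proof of the conjecture.
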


\begin{conj}\label{conj:oddp2}
For any positive integer $m$, there are positive integers $a$ and $b$
such that $am>b$ and
$$
\binom {amn} { bn}\equiv 0 \pmod {an-1}
$$
for all $n\geqslant 1$.
\end{conj}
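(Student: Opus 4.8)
The plan is to exhibit the required $a$ and $b$ explicitly and then verify the divisibility one prime at a time. Let $c$ be the product of all odd primes $p$ with $p\le 2m-1$ (so $c=1$ when $m=1$), and put $a=2c$, $b=c$. Then $am=2cm>c=b$, and because $amn=2mcn$, $bn=cn$, and $an-1=2cn-1$, the assertion $\binom{amn}{bn}\equiv 0\pmod{an-1}$ for all $n\ge1$ is equivalent, upon writing $N=cn$, to
$$
\binom{2mN}{N}\equiv 0\pmod{2N-1}\qquad\text{for every positive multiple }N\text{ of }c.
$$
I would establish this via Legendre's formula for the $p$-adic valuation of factorials: it suffices to show $v_p\bigl(\binom{2mN}{N}\bigr)\ge v_p(2N-1)$ for every prime $p$, where $v_p$ denotes $p$-adic valuation.

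The prime $p=2$ is immediate since $2N-1$ is odd. So let $p$ be an odd prime dividing $2N-1$; then $p\nmid N$, and since $c\mid N$ this forces $p\nmid c$, so $p$ is not among the odd primes $\le 2m-1$, that is, $p>2m-1$. Now fix $j\ge1$ with $p^{j}\mid 2N-1$, so that $2N\equiv1\pmod{p^{j}}$ and hence $N\equiv\frac{p^{j}+1}{2}\pmod{p^{j}}$. Writing $2m-1=2(m-1)+1$, one computes $(2m-1)N\equiv(m-1)+\frac{p^{j}+1}{2}\pmod{p^{j}}$, and the expression on the right is already the least non-negative residue because $p^{j}>2m-1$. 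Therefore
$$
\bigl(N\bmod p^{j}\bigr)+\bigl((2m-1)N\bmod p^{j}\bigr)=p^{j}+m\ge p^{j},
$$
so that $\fl{2mN/p^{j}}-\fl{N/p^{j}}-\fl{(2m-1)N/p^{j}}=1$. Summing this over $j=1,\dots,v_p(2N-1)$, and noting that all remaining terms in Legendre's sum for $v_p\bigl(\binom{2mN}{N}\bigr)$ are non-negative, gives $v_p\bigl(\binom{2mN}{N}\bigr)\ge v_p(2N-1)$, which finishes the proof.

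The part I regard as the real point is the passage to the sublattice $c\mid N$: the clean identity $(N\bmod p^{j})+((2m-1)N\bmod p^{j})=p^{j}+m$ requires $p^{j}>2m-1$, so the odd primes $p\le 2m-1$ are genuinely obstructive, and the factor $c$ is precisely what prevents them from dividing $2N-1$. That some such restriction is unavoidable is already visible for $m=2$: the naive choice $a=2$, $b=1$ fails, since $\binom{4n}{n}$ need not be divisible by $2n-1$ (take $n=2$), whereas the construction above returns $a=6$, $b=3$ and recovers $\binom{12n}{3n}\equiv 0\pmod{6n-1}$ from \eqref{eq:6n3n-2}; likewise $m=1$ returns $a=2$, $b=1$ and $\binom{2n}{n}\equiv 0\pmod{2n-1}$. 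I would also stress that this is a statement the $q$-machinery of Section~\ref{sec:q} cannot see: the rational function $\frac{1-q}{1-q^{2N-1}}\begin{bmatrix}2mN\\ N\end{bmatrix}_q$ is in general \emph{not} a polynomial (already $\frac{1-q}{1-q^{2n-1}}\begin{bmatrix}6n\\ 3n\end{bmatrix}_q$ fails to be one whenever $3\mid 2n-1$, since then $\Phi_{3}(q)$ occurs in it with exponent $-1$), so no cyclotomic-counting argument of the type used in the proof of Theorem~\ref{thm:4} can replace the summed $p$-adic estimate above.
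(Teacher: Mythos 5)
Your proof is correct, and it should be said up front that the paper does not actually prove this statement: it appears there only as a conjecture, with the remark that the congruences \eqref{eq:6n3n}--\eqref{eq:6n3n-5} settle the cases $1\leqslant m\leqslant 5$ and a closing Note recording (without argument) that the general case was proved by Mirzavaziri. So there is no proof in the paper to compare yours against, and your elementary argument settles the full conjecture. I verified the key points: with $c=\prod_{3\le p\le 2m-1}p$, $a=2c$, $b=c$, and $N=cn$, any odd prime $p\mid 2N-1$ is coprime to $N$, hence coprime to $c$, hence satisfies $p>2m-1$ and a fortiori $p^{j}>2m-1$; consequently $(m-1)+\tfrac{p^{j}+1}{2}$ really is the least non-negative residue of $(2m-1)N$ modulo $p^{j}$, the identity $(N\bmod p^{j})+((2m-1)N\bmod p^{j})=p^{j}+m\ge p^{j}$ holds, and Legendre's formula then yields a carry at every level $j\le v_p(2N-1)$, so $v_p\bigl(\tbinom{2mN}{N}\bigr)\ge v_p(2N-1)$ as claimed. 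Your construction also correctly specialises to the known instances ($m=1$ gives \eqref{eq:6n3n-1}, $m=2$ gives the first congruence of \eqref{eq:6n3n-2}), and your observation that the factor $c$ is genuinely needed (already $\tbinom{4n}{n}\not\equiv 0\pmod{2n-1}$ at $n=2$) is apt. Finally, your remark that the statement cannot be obtained by the cyclotomic-counting method of Section~\ref{sec:q} is also correct: for $\frac{1-q}{1-q^{2n-1}}\left[\begin{smallmatrix}6n\\3n\end{smallmatrix}\right]_q$ the exponent of $\Phi_3(q)$ equals $-1+2n-n-n=-1$ whenever $3\mid(2n-1)$, so the $q$-lift is not a polynomial and the divisibility here is an intrinsically ``summed over all $j$'' $p$-adic phenomenon rather than a termwise one. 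In short: this is a complete and self-contained proof of a statement the paper leaves open, and more elementary than anything in Sections~\ref{sec:Proof2} and~\ref{sec:Proof?}.
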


Note that the congruences \eqref{eq:6n3n}--\eqref{eq:6n3n-5}
imply that Conjecture~\ref{conj:oddp2} is true for $1\leqslant m\leqslant 5$.

It seems clear that, for each {\it specific prime $p$},
a proof of Conjecture~\ref{conj:oddp} in the style of the
proof of Theorem~\ref{conj:2} in Section~\ref{sec:Proof?}
can be given. On the other hand, a proof for {\it arbitrary $p$}
will likely require a new idea.

\medskip
We end the paper with the following conjecture, strengthening
the last part of Theorem~\ref{thm:4}.

\begin{conj}\label{conj:330n88n}
For all positive integers $n$ and non-negative integers $k$ with
$0\le k\le
125n^2 -25n+4$,
the coefficient of $q^k$ of the polynomial
$$
\frac {(1-q)^2} {(1-q^{10n-1})(1-q^{15n-1})}
\begin{bmatrix} {30n}\\ { 5n}\end{bmatrix}_q
$$
is non-negative,
except for $k=1$ and $k=125n^2 -25n+3$, in which case
the corresponding coefficient equals $-1$.
\end{conj}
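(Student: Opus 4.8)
The statement to be proved is Conjecture~\ref{conj:330n88n}, concerning the precise coefficient structure of
$$
P_n(q):=\frac {(1-q)^2} {(1-q^{10n-1})(1-q^{15n-1})}
\begin{bmatrix} {30n}\\ { 5n}\end{bmatrix}_q.
$$
From Theorem~\ref{thm:4} we already know that $P_n(q)$ is a genuine polynomial in $q$; the new content is that all its coefficients are non-negative \emph{except} for two, namely the coefficients of $q^1$ and of $q^{125n^2-25n+3}$, which both equal $-1$. The plan is to refine the cyclotomic-polynomial bookkeeping of the proof of Theorem~\ref{thm:4} into a statement about the power series expansion of the \emph{difference} $P_n(q)-\big(q+q^{125n^2-25n+3}\big)\cdot(-1)$, or, more efficiently, to write $P_n(q)=Q_n(q)-q-q^{D-2}$ where $D=\deg\big[\smallmatrix 30n\\5n\endsmallmatrix\big]_q=(30n-5n)(5n)=125n^2$, and show $Q_n(q)$ has non-negative coefficients by exhibiting it as a product of a reciprocal unimodal polynomial with $\tfrac{1-q^m}{1-q^n}$-type factors, so that Lemma~\ref{lem:RSW} (or a minor variant) applies.

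Concretely, the first step is to compute $\deg P_n(q)=125n^2-(10n-1)-(15n-1)+2=125n^2-25n+4$, which matches the range stated in the conjecture and confirms that the claimed exceptional indices $1$ and $125n^2-25n+3$ are symmetric about the center (since $1+(125n^2-25n+3)=125n^2-25n+4=\deg P_n$), as they must be because $P_n(q)$ is reciprocal. The second step is to isolate \emph{why} small exponents misbehave: in the proof of Theorem~\ref{thm:4} the only place a cyclotomic factor $\Phi_d$ could have occurred with exponent $-1$ was when $d\mid(10n-1)$ or $d\mid(15n-1)$, and the argument there showed $e_d\ge0$. The sharper claim is that $P_n(q)=\tfrac{1-q}{1-q^{10n-1}}\cdot\tfrac{1-q}{1-q^{15n-1}}\big[\smallmatrix 30n\\5n\endsmallmatrix\big]_q$ fails non-negativity only through the low-order (and, by reciprocity, high-order) terms coming from the two factors $\tfrac{1-q}{1-q^{10n-1}}$ and $\tfrac{1-q}{1-q^{15n-1}}$ individually: each such factor, as a formal power series, is $1-q+O(q^{10n-1})$ resp.\ $1-q+O(q^{15n-1})$, so their product begins $1-2q+q^2+O(q^{10n-1})$, and multiplying by $\big[\smallmatrix 30n\\5n\endsmallmatrix\big]_q=1+q+2q^2+\cdots$ (the $q$-binomial has coefficients $1,1,2,\dots$ for $n\ge1$) yields a power series beginning $1+q+2q^2+\cdots-2q-2q^2-\cdots+q^2+\cdots=1-q+\cdots$, i.e.\ coefficient $-1$ at $q^1$ and (one checks) non-negative at $q^2$. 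The third step is to promote this local computation to a global one: define $Q_n(q):=P_n(q)+q+q^{125n^2-25n+3}$ and show that $Q_n(q)$ is reciprocal, has non-negative coefficients, and in fact that the correction is exactly $q+q^{D-2}$ and nothing more — i.e.\ the coefficients of $q^0,q^2,q^3,\dots,q^{D-3}$ in $P_n(q)$ were already non-negative. For this, the cleanest route is to factor $P_n(q)=R_n(q)\cdot\tfrac{1-q^{10n-1}\text{-ish}}{\cdots}$; more realistically, one shows the coefficient of $q^k$ in $(1-q)^2\big[\smallmatrix 30n\\5n\endsmallmatrix\big]_q/(1-q^{10n-1})(1-q^{15n-1})$, viewed as a power series, is non-negative for $2\le k\le 10n-2$ by a direct estimate (here only $\big[\smallmatrix 30n\\5n\endsmallmatrix\big]_q$ modulo $q^{10n-1}$ matters, i.e.\ the generating function $\prod(1-q^{j})^{-1}$-type truncation), is equal to $-1$ at $k=1$, and then invoke reciprocity of $P_n(q)$ to handle $k\ge D-1$, while for the bulk range $10n-1\le k\le D-(10n-1)$ one reuses the exponent bound $e_d\ge 0$ from the proof of Theorem~\ref{thm:4} together with Lemma~\ref{lem:RSW} applied to $Q_n(q)$ after checking it is still a polynomial with a reciprocal-unimodal cofactor.

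**Where the difficulty lies.** The routine parts are the degree computation and the low-order expansion near $q^0$ and $q^1$. The genuine obstacle is the middle range: showing that \emph{no other} coefficient of $P_n(q)$ is negative. Lemma~\ref{lem:RSW} as stated produces non-negativity of $\tfrac{1-q^m}{1-q^n}P(q)$ from unimodality of $P$, but here we are dividing by a \emph{product} $(1-q^{10n-1})(1-q^{15n-1})$ and the naive two-step application loses too much — after the first division the intermediate polynomial need not be unimodal. So the core task is to find the right reciprocal-unimodal "seed" polynomial $S_n(q)$ and integers $m_1\le n_1$, $m_2\le n_2$ such that $Q_n(q)=\tfrac{1-q^{m_1}}{1-q^{n_1}}\tfrac{1-q^{m_2}}{1-q^{n_2}}S_n(q)$ with a valid iterated application of (a strengthened form of) Lemma~\ref{lem:RSW}; alternatively, one must prove directly that the partial sums defining the coefficients of $P_n(q)$ never dip below zero except at the two advertised spots, which amounts to a careful unimodality-type argument for the convolution of the Gaussian polynomial with the explicit rational function $\tfrac{(1-q)^2}{(1-q^{10n-1})(1-q^{15n-1})}=\sum_{i,j\ge0}(q^{(10n-1)i}-q^{(10n-1)i+1})(q^{(15n-1)j}-q^{(15n-1)j+1})$. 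Controlling the sign of this double sum against the (known, but intricate) coefficient behaviour of $\big[\smallmatrix 30n\\5n\endsmallmatrix\big]_q$ uniformly in $n$ is, I expect, the crux; it may well require the kind of explicit prime-gap / interval input used in the proof of Theorem~\ref{conj:2}, or a clever telescoping identity peculiar to the ratio $30n:5n$ (equivalently $6:1$), analogous to the ad hoc cyclotomic case analysis carried out for the six numerator/denominator pairs in the proof of Theorem~\ref{thm:4}.
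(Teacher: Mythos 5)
The statement you are addressing is presented in the paper as Conjecture~\ref{conj:330n88n}; the paper itself offers no proof of it (the concluding ``Note'' records proofs only of Conjectures~\ref{conj:oddp} and~\ref{conj:oddp2}), so there is no argument of the authors to compare yours against. More importantly, what you have written is a plan, not a proof. The preliminary computations are correct and useful: the degree of $P_n(q)$ is indeed $125n^2-25n+4$, the two exceptional exponents $1$ and $125n^2-25n+3$ are reciprocal images of each other (consistent with $P_n$ being reciprocal), and the expansion $\frac{(1-q)^2}{(1-q^{10n-1})(1-q^{15n-1})}=1-2q+q^2+O(q^{10n-1})$ combined with $\left[\begin{smallmatrix}30n\\5n\end{smallmatrix}\right]_q=1+q+2q^2+\cdots$ does force the coefficient of $q^1$ to equal $-1$. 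But this only verifies the easy half of the claim, namely that the two advertised coefficients really are $-1$. The substantive assertion --- that \emph{every other} coefficient is non-negative --- is exactly the part you defer, and you say so yourself (``is, I expect, the crux; it may well require\dots'').

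The routes you sketch for closing that gap do not obviously close it. Applying Lemma~\ref{lem:RSW} to $P_n(q)$ itself is impossible, since the lemma would conclude non-negativity of \emph{all} coefficients, contradicting the two genuine $-1$'s; applying it instead to $Q_n(q)=P_n(q)+q+q^{125n^2-25n+3}$ requires exhibiting a factorisation $Q_n(q)=\frac{1-q^{m_1}}{1-q^{n_1}}\frac{1-q^{m_2}}{1-q^{n_2}}S_n(q)$ with $S_n$ reciprocal and unimodal, and no such factorisation is given or plausible on its face (there is no reason the additive correction by two monomials should restore a multiplicative structure of this kind). The cyclotomic exponent count from the proof of Theorem~\ref{thm:4} establishes polynomiality only and says nothing about the signs of individual coefficients. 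What would actually be needed is a uniform-in-$n$ lower bound on the coefficients of $\left[\begin{smallmatrix}30n\\5n\end{smallmatrix}\right]_q$ strong enough to dominate the convolution with $\sum_{i,j\ge0}\bigl(q^{(10n-1)i}-q^{(10n-1)i+1}\bigr)\bigl(q^{(15n-1)j}-q^{(15n-1)j+1}\bigr)$ throughout the middle range, and you have neither supplied nor precisely formulated such a bound. As it stands, your write-up is a correct diagnosis of where the difficulty lies, not a proof; the statement remains open.
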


\section*{Note}
Conjecture~\ref{conj:oddp} was proved by Madjid Mirzavaziri and
the second author.
Conjecture~\ref{conj:oddp2} was proved by Madjid Mirzavaziri.

\section*{Acknowledgement}
We thank Pierre Dusart for very helpful discussion concerning
Lemma~\ref{lem:p2}.

\end{document}